\begin{document}

\newtheorem{theorem}{Theorem}
\newtheorem{lemma}[theorem]{Lemma}
\newtheorem{claim}[theorem]{Claim}
\newtheorem{cor}[theorem]{Corollary}
\newtheorem{prop}[theorem]{Proposition}
\newtheorem{example}[theorem]{Example}
\newtheorem{definition}{Definition}
\newtheorem{quest}[theorem]{Question}
\newtheorem{problem}[theorem]{Problem}

\numberwithin{theorem}{section}
\numberwithin{equation}{section}

\def\eps{\varepsilon}
\def\al{\alpha}
\def\be{\beta}
\def\ga{\gamma}
\def\ro{\varrho}
\def\N{\mathbb{N}}
\def \Z{{\mathbb Z}}
\def \Q{{\mathbb Q}}
\def \R{{\mathbb R}}
\def \C{{\mathbb C}}
\def \T{{\mathbb T}}
\def \D{{\mathbb D}}
\newcommand{\cF}{\mathcal{F}}
\newcommand{\cG}{\mathcal{G}}
\newcommand{\cM}{\mathcal{M}}
\newcommand{\cE}{\mathcal{E}}
\newcommand{\cT}{\mathcal{T}}
\newcommand{\dis}{\displaystyle}
\newcommand{\supp}{\textup{supp}}
\newcommand{\dist}{\textup{dist}}
\newcommand{\diam}{\textup{diam}}
\newcommand{\ca}{\textup{cap}}
\newcommand{\ep}{\varepsilon}

\title[Weighted Fekete points on the real line and the unit circle]{Weighted Fekete points on the real line and the unit circle}

\author{Art\= uras Dubickas}
\address{Institute of Mathematics, Faculty of Mathematics and Informatics, Vilnius University, Naugarduko 24,
LT-03225 Vilnius, Lithuania}
\email{arturas.dubickas@mif.vu.lt}

\author{Igor Pritsker}
\address{Department of Mathematics, Oklahoma State University, Stillwater, OK 74078, U.S.A.}
\email{igor@math.okstate.edu}

\begin{abstract}
Weighted Fekete points are defined as those that maximize the weighted version of the Vandermonde determinant over a fixed set. They can also
be viewed as the equilibrium distribution of the unit discrete charges in an external electrostatic field. While these
points have many applications, they are very difficult to find explicitly, and are only known in a few (unweighted) classical cases. We give two
rare explicit examples of weighted Fekete points. The first one is for the weights $w(x)=|x-ai|^{-s}$ on the real line,
with $s\ge 1$ and $a\neq 0,$ while the second is for the weights $w(z)=1/|z-b|$ on the unit circle, with $b\in\R,\ b\neq\pm 1.$
In both cases, we provide solutions of the continuous energy problems with
external fields that express the limit versions of considered weighted Fekete points problems.
\end{abstract}

\keywords{Fekete points, weights, minimum energy, external fields, Jacobi polynomials}
\subjclass[2010]{30C10, 30C15, 31C20, 12D10}

\maketitle

\section{Weighted Fekete points on the real line} \label{FPR}

For a set of points $Z_n=\{z_k\}_{k=1}^n\subset\C,\ n\ge 2$, the associated Vandermonde determinant is defined by
\[
V(Z_n):=\prod_{1\le j<k\le n} (z_j-z_k).
\]
Fekete \cite{Fe} introduced the notion of the $n$th diameter for a compact set $E\subset\C$ by setting
\[
\delta_n(E):=\max_{Z_n\subset E} |V(Z_n)|^{\frac{2}{n(n-1)}}.
\]
A set of points $\cF_n\subset E$ is called the {\it $n$th Fekete points} of $E$ if it achieves the above maximum $\delta_n(E)$. For example, if $E$ is the unit circumference, then the set of $n$ vertices of any regular $n$-gon inscribed
in $E$ represents the $n$th Fekete points for $E$ (this follows from Hadamard's inequality, see also \cite{BHS,Ran,ST}). Thus a set of Fekete points need not be unique in general. However, if $E=[-1,1]$ then the Fekete points are uniquely identified as the zeros of Legendre polynomial of the second kind. More precisely, let $P_n(x):=\big((x^2-1)^n\big)^{(n)}/(2^n n!)$ be the classical Legendre polynomial of degree $n$, orthogonal on $[-1,1]$ with respect to Lebesgue measure. It is known that then the $n$th Fekete points for $[-1,1]$ are given by the zeros of the polynomial $(x^2-1)P'_{n-1}(x),$ see \cite{Sti1}, \cite{Sch} or p.~140 and Problem 37 in \cite{Sze1}. If $w:E\to[0,\infty)$ is a continuous weight function, then we similarly define the weighted Fekete points by maximizing the weighted Vandermonde determinant:
\begin{align}\label{wFekete}
 \delta_n^w(E) &:=\sup_{Z_n\subset E} \left(\prod_{1\le j<k\le n} |z_j-z_k| w(z_j) w(z_k) \right)^{\frac{2}{n(n-1)}} \\ &= \sup_{Z_n\subset E} \left(|V(Z_n)|^{\frac{2}{n(n-1)}} \prod_{k=1}^{n} w(z_k)^{2/n}\right). \nonumber
\end{align}
In fact, the latter definition allows us to consider closed unbounded sets $E$, provided we have an admissible weight satisfying $\lim_{z\to\infty} |z|w(z) = 0$; see Chapter~III of the book of Saff and Totik \cite{ST} for more details and more general definitions. Some recent results on the asymptotic distribution of
Fekete points (including those on manifolds) can be found, for instance, in \cite{BHS}, \cite{DMN}, \cite{Guo}, \cite{Lev}, \cite{Mont}, and \cite{Vu}.

Our main goal is to describe the weighted Fekete points for the weights $w(x)=|x-ai|^{-s}$ on the real line, with $s\ge 1$ and $a\neq 0.$ Note that for $s<1$ the maximization problem in \eqref{wFekete} gives infinity, and thus is not well defined. (To see this it suffices to take
$n-1$ fixed points $z_i$, $i=1,\dots,n-1$, and $z_n$ arbitrarily large.) The borderline case $s=1$ formally does not fit into the theory developed in \cite{ST}, as this weight is not admissible in the sense of that book. However, it represents the classical case of elliptic potential
theory problem considered in Tsuji \cite[pp. 89-94]{Tsu}. In fact, we are able to give a remarkably explicit solution for that case $s=1$, i.e., for the weight $w(x)=1/|x-ai|$, completely describing these ``elliptic" Fekete points.

\begin{theorem} \label{s=1}
Let $a>0$ and $n \geq 2$. Any weighted Fekete point set $\cF_n^w$ for $w(x)=1/|x-ai|$ on the real line has the form
\begin{equation}\label{wFpoints}
\cF_n^w = \{x_1,\dots,x_n\} = \{a \tan(\gamma + k\pi/n),\quad k=0,1,\ldots,n-1\},
\end{equation}
where $\gamma \in (-\pi/2,-\pi/2+\pi/n)$. Equivalently, the weighted Fekete points are the roots of the monic polynomial
\begin{equation}\label{Fpol}
  F(x) = \frac{1}{2an} \Big( (an-Bi)(x+ai)^n + (an+Bi)(x-ai)^n \Big),
\end{equation}
where
\begin{equation}\label{bbb}
B = -a\sum_{k=0}^{n-1} \tan(\gamma +k\pi/n) = an \cot(n\pi/2+n\gamma).
\end{equation}
Moreover, we have
\begin{equation}\label{wDiam}
\delta_n^w(\R) = \frac{n^{1/(n-1)}}{2a}.
\end{equation}
\end{theorem}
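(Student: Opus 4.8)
The plan is to reduce the weighted extremal problem on $\R$ to the classical (unweighted) Fekete problem on the unit circle via a tangent substitution. First I would substitute $x_j=a\tan\theta_j$ with $\theta_j\in(-\pi/2,\pi/2)$, which parametrizes $\R$ bijectively. The point of this change of variables is that the weight and the Vandermonde factors collapse together: since $|a\tan\theta-ai|=a/\cos\theta$ for $\theta\in(-\pi/2,\pi/2)$ and $a\tan\theta_j-a\tan\theta_k=a\sin(\theta_j-\theta_k)/(\cos\theta_j\cos\theta_k)$, a direct computation gives
\begin{equation*}
\frac{|x_j-x_k|}{|x_j-ai|\,|x_k-ai|}=\frac{|\sin(\theta_j-\theta_k)|}{a}.
\end{equation*}
Hence the quantity maximized in \eqref{wFekete} becomes $a^{-n(n-1)/2}\prod_{1\le j<k\le n}|\sin(\theta_j-\theta_k)|$, with all the weight dependence absorbed into a harmless constant.

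Next I would recognize the trigonometric product as a circle Vandermonde. Putting $w_j=e^{2i\theta_j}$ on $\T$, the identity $|e^{2i\theta_j}-e^{2i\theta_k}|=2|\sin(\theta_j-\theta_k)|$ shows that $\prod_{j<k}|\sin(\theta_j-\theta_k)|=2^{-n(n-1)/2}\prod_{j<k}|w_j-w_k|$. Since $\theta\mapsto e^{2i\theta}$ is a bijection from $(-\pi/2,\pi/2)$ onto $\T\setminus\{-1\}$, maximizing over $\{x_j\}\subset\R$ is the same as maximizing the plain Vandermonde over $\{w_j\}\subset\T\setminus\{-1\}$. By Hadamard's inequality (as already noted in the introduction for the unit circumference) this maximum equals $n^{n/2}$ and is attained exactly by the rotated regular $n$-gons $w_j=e^{2i\gamma}e^{2\pi ik/n}$, the equality case pinning these configurations down uniquely up to rotation. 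Pulling back through $\theta_j=\gamma+k\pi/n$ and choosing the unique representative with all $\theta_j\in(-\pi/2,\pi/2)$ forces $\gamma\in(-\pi/2,-\pi/2+\pi/n)$ and yields \eqref{wFpoints}. Combining the two reductions gives the extremal value $a^{-n(n-1)/2}2^{-n(n-1)/2}n^{n/2}$, and raising it to the power $2/(n(n-1))$ produces \eqref{wDiam}.

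It remains to put the extremal set in polynomial form. From $x=a\tan\theta$ one computes $e^{2i\theta}=-(x-ai)/(x+ai)$, so the condition that $w=e^{2i\theta}$ be a vertex of the $n$-gon, namely $w^n=e^{2ni\gamma}$, translates into $(x-ai)^n=\lambda(x+ai)^n$ with $\lambda=(-1)^ne^{2ni\gamma}$. The Fekete points are therefore the roots of $(x-ai)^n-\lambda(x+ai)^n$; normalizing to make it monic and writing $\lambda=(Bi-an)/(Bi+an)$ matches this with \eqref{Fpol}, and reading off the coefficient of $x^{n-1}$ confirms $B=-\sum_k x_k=-a\sum_{k=0}^{n-1}\tan(\gamma+k\pi/n)$. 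The closed form in \eqref{bbb} then follows from the cotangent summation identity $\sum_{k=0}^{n-1}\cot(\phi+k\pi/n)=n\cot(n\phi)$ applied with $\phi=\gamma+\pi/2$, together with $\tan\alpha=-\cot(\alpha+\pi/2)$.

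The step I expect to be the main obstacle is the uniqueness claim in \eqref{wFpoints}: I must argue not merely that regular $n$-gons maximize the circle Vandermonde, but that they are the \emph{only} maximizers, and then track this rigidity faithfully back through the tangent substitution, making sure the excluded point $w=-1$ (i.e.\ $x=\infty$) never needs to be a vertex and that the restriction $\gamma\in(-\pi/2,-\pi/2+\pi/n)$ captures each extremal configuration exactly once. Since the weight $w(x)=1/|x-ai|$ is not admissible in the sense of \cite{ST}, this substitution-based argument also has to stand on its own rather than invoking the general theory.
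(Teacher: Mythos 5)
Your proposal is correct and follows essentially the same route as the paper's proof: the substitution $x=a\tan\theta$, the collapse of the weighted Vandermonde into $a^{-n(n-1)/2}\prod_{j<k}|\sin(\theta_j-\theta_k)|$, and the passage to the unit circle via $w=e^{2i\theta}$ with Hadamard's inequality and its equality case (this last step is precisely the paper's Lemma~\ref{prodsin}, including the characterization of maximizers as arithmetic progressions with difference $\pi/n$, which the paper likewise takes from \cite{BC} and \cite{Ger} rather than reproving). The one place you genuinely diverge is the identification of the polynomial \eqref{Fpol}: the paper verifies directly, by trigonometric manipulation, that the right-hand side of \eqref{Fpol} vanishes at the points $a\tan(\gamma+k\pi/n)$, whereas you conjugate the $n$-gon condition $w^n=e^{2in\gamma}$ through the M\"obius map $e^{2i\theta}=-(x-ai)/(x+ai)$ to get $(x-ai)^n=\lambda(x+ai)^n$ and then check that $\lambda=(Bi-an)/(Bi+an)$ equals $(-1)^ne^{2in\gamma}$; this is a cleaner derivation, and it checks out (with $\psi=n\pi/2+n\gamma$ one has $(Bi-an)/(Bi+an)=e^{2i\psi}=(-1)^ne^{2in\gamma}$), while the cotangent summation identity you invoke for \eqref{bbb} is the same identity the paper cites as (432) of \cite{Jol}. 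Your closing concerns are already handled by your own argument: since $\gamma\in(-\pi/2,-\pi/2+\pi/n)$ forces $\psi\in(0,\pi)$, the vertex $w=-1$ never occurs, $\lambda\neq 1$ so the normalized polynomial is indeed monic of degree $n$, and the equality case of Hadamard pins down exactly the configurations \eqref{wFpoints}.
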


Note that the weighted Fekete points \eqref{wFpoints} are highly non-unique in the above case, since we can take any real $\gamma$ in the interval
$(-\pi/2,-\pi/2+\pi/n)$.

The weight $w(x)=|x-ai|^{-s}$ on the real line is admissible for $s>1$, and all asymptotic results developed in \cite{ST} apply here.
However, in contrast to the case $s=1$ (Theorem~\ref{s=1}), we identify the weighted Fekete points uniquely in this case as roots of some explicit polynomials \eqref{Fpols>1}. In fact, these polynomials
are directly related to Jacobi polynomials defined by
\begin{align}\label{Jacobi}
  P_n^{(\alpha,\beta)}(x) &:= 2^{-n} \sum_{k=0}^n {n+\alpha \choose n-k} {n+\beta \choose k} (x-1)^k (x+1)^{n-k} \\
  &= \frac{(\alpha + \beta + n + 1)_n}{n!\, 2^n} x^n + \ldots, \nonumber
\end{align}
where $(t)_n:=t(t+1)\dots (t+n-1)$ is Pochhammer's symbol (or the rising factorial), and $${t \choose n}:=\frac{t(t-1)\dots(t-n+1)}{n!}$$
is a generalized binomial coefficient.
There are several other equivalent representations for Jacobi polynomials, e.g., via the hypergeometric function, that can be found in
\cite[Ch. 4]{Ismbook}. In the special case $\alpha=\beta=\lambda-1/2$, Jacobi polynomials are also expressible up to a constant factor
(where the constant depends on $n$ and $\lambda$) as Gegenbauer (or ultraspherical) polynomials
$$C^{\lambda}(x):=\sum_{k=0}^{\lfloor n/2 \rfloor}(-1)^k \frac{(\lambda)_{n-k}}{k! (n-2k)!} (2x)^{n-2k}.$$

It is important to note that in our context the parameters $\alpha=\beta=-s(n-1)-1$ not only depend on $n$ but are also outside the classical range
$\alpha>-1$ and $\beta>-1.$ Thus many classical properties such as orthogonality with respect to Jacobi weight on $[-1,1]$ do not hold.
However, we are still able to use the formula for the discriminant of Jacobi polynomial given in (3.4.16) of \cite[Ch. 3]{Ismbook} to
compute the discriminant of our extremal polynomials explicitly, and find the value of the $n$th weighted diameter.

\begin{theorem}\label{s>1}
Suppose that $a>0,\ s>1$ and $n \geq 2$. The weighted Fekete point set $\cF_n^w$ for $w(x)=|x-ai|^{-s}$ on the real line is unique, and is given
by the roots of the polynomial
\begin{align}\label{Fpols>1}
  G(x) := x^n + \sum_{k=1}^{\lfloor n/2 \rfloor} \left((-1)^ka^{2k} {n\choose 2k} \frac{(2k-1)!!}{\prod_{j=1}^{k} (2s(n-1)-2n+2j+1)} \right) x^{n-2k}.
\end{align}
Moreover, we have
\begin{align}\label{GJ}
  G(x) &= \frac{(2ai)^n n!}{(n-2s(n-1)-1)_n} P_n^{(-s(n-1)-1,-s(n-1)-1)}(-ix/a)
\end{align}
and
\begin{align}\label{wDiams>1}
\delta_n^w(\R) &=  \frac{(2a)^{1-2s} (n!)^{2/n}}{\left[(-s(n-1))_n\right]^{2s/n}} \left[(n-2s(n-1)-1)_n\right]^{2(s-1)/n} \\
&\times \left( \prod_{k=1}^{n} k^{k-2n+2} (k-s(n-1)-1)^{2k-2} (n+k-2s(n-1)-2)^{n-k} \right)^{\frac{1}{n(n-1)}}. \nonumber
\end{align}
\end{theorem}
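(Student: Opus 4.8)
The plan is to characterize the maximizer of \eqref{wFekete} through its first-order optimality conditions and then to recognize the resulting monic polynomial as a rescaled Jacobi polynomial. Since $s>1$ the weight is admissible, so a weighted Fekete set $\cF_n^w=\{z_1,\dots,z_n\}$ exists; at such a set the points are necessarily distinct (otherwise the product in \eqref{wFekete} vanishes) and bounded, because the decay $|z|^{(1-s)(n-1)}\to 0$ of the objective forces the maximizing configuration to stay in a fixed compact set. Hence the configuration is an interior critical point of the smooth function $\sum_{j<k}\log|z_j-z_k|-\tfrac{s(n-1)}{2}\sum_k\log(z_k^2+a^2)$, and setting each partial derivative to zero yields the equilibrium system
\[
\sum_{j\ne m}\frac{1}{z_m-z_j}=\frac{s(n-1)\,z_m}{z_m^2+a^2},\qquad m=1,\dots,n.
\]

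Next I would pass to the monic polynomial $G(x)=\prod_{k=1}^n(x-z_k)$. Using the identity $\sum_{j\ne m}(z_m-z_j)^{-1}=G''(z_m)/(2G'(z_m))$, the system above shows that the polynomial $(x^2+a^2)G''(x)-2s(n-1)xG'(x)$, which has degree at most $n$, vanishes at every root of $G$; therefore it equals $c\,G(x)$ for a constant $c$. Matching leading coefficients gives $c=n(n-1)(1-2s)$, so $G$ solves the second-order linear ODE
\[
(x^2+a^2)G''-2s(n-1)xG'+n(n-1)(2s-1)G=0.
\]

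I would then solve this ODE by substituting $G(x)=\sum_{j=0}^n c_jx^j$ and reading off the two-step recursion
\[
a^2(j+2)(j+1)c_{j+2}+\bigl[j(j-1)-2s(n-1)j+n(n-1)(2s-1)\bigr]c_j=0,
\]
whose bracket factors as $(j-n)\bigl(j-(2s-1)(n-1)\bigr)$. This bracket vanishes at $j=n$ (which is exactly why degree $n$ is admissible), the relation at $j=n-1$ forces $c_{n-1}=0$, and, crucially, the hypothesis $s>1$ guarantees $(2s-1)(n-1)>n-1$, so the bracket is nonzero for all $j=n-2,n-4,\dots$; the recursion then determines the remaining coefficients uniquely, producing the closed form \eqref{Fpols>1}. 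This also settles uniqueness: any Fekete polynomial must be this unique monic solution, so every weighted Fekete set equals its root set. To obtain \eqref{GJ}, I would check that under $x=iat$ the Jacobi ODE for $P_n^{(\alpha,\alpha)}$ with $\alpha=-s(n-1)-1$ transforms into exactly the ODE above, and then fix the proportionality constant by matching leading coefficients via \eqref{Jacobi}.

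Finally, for \eqref{wDiams>1} I would write the maximum value as $\delta_n^w(\R)=\bigl[\,\mathrm{disc}(G)\,(\,G(ai)G(-ai)\,)^{-s(n-1)}\,\bigr]^{1/(n(n-1))}$, using $\prod_k(z_k^2+a^2)=G(ai)G(-ai)$, evaluate $G(\pm ai)$ through the special values $P_n^{(\alpha,\alpha)}(\pm1)$ together with \eqref{GJ}, and compute $\mathrm{disc}(G)$ from the substitution $x=iat$ and the Jacobi discriminant formula (3.4.16) in \cite{Ismbook}. I expect the discriminant bookkeeping—tracking the factor $(ia)^{n(n-1)}$ from the change of variables, the leading-coefficient normalization relating $\mathrm{disc}(G)$ to the discriminant of $P_n^{(\alpha,\alpha)}$, and simplifying the resulting products of Pochhammer symbols into \eqref{wDiams>1}—to be the main obstacle, whereas the conceptual core (the ODE and its identification with a Jacobi polynomial) is comparatively clean.
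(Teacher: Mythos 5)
Your proposal follows the same route as the paper's proof: first-order optimality conditions at the maximizing configuration yield the differential equation $(x^2+a^2)G''-2s(n-1)xG'+n(n-1)(2s-1)G=0$ (your constant $n(n-1)(2s-1)$ equals the paper's $n(2s(n-1)-n+1)$); a power-series analysis shows the monic degree-$n$ solution is unique and equals \eqref{Fpols>1} (this is exactly the paper's Lemma~\ref{prodsin2} on equation \eqref{diff10}, with $\lambda=2s(n-1)$), which also settles uniqueness of the Fekete set; the substitution $t=-ix/a$ identifies $G$ with a multiple of $P_n^{(-s(n-1)-1,-s(n-1)-1)}$; and $\delta_n^w(\R)$ is extracted from the discriminant of $G$ together with $G(\pm ai)$. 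Your expression $\delta_n^w(\R)=\bigl[\mathrm{disc}(G)\,(G(ai)G(-ai))^{-s(n-1)}\bigr]^{1/(n(n-1))}$ is equivalent to the paper's \eqref{dD}, since $G(ai)G(-ai)=\prod_k(z_k^2+a^2)=|G(ai)|^2$ when the roots are real.

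There is, however, one genuine gap, and it sits precisely at the two points where you invoke classical Jacobi-polynomial facts. In this problem $\alpha=\beta=-s(n-1)-1<-1$, far outside the classical range $\alpha,\beta>-1$, and both facts you cite --- the Jacobi differential equation (4.2.6) of \cite{Ismbook} and the discriminant formula (3.4.16) of \cite{Ismbook} --- are derived in that reference only for $\alpha,\beta>-1$. So ``check that the Jacobi ODE transforms into the ODE above'' and ``compute $\mathrm{disc}(G)$ from the Jacobi discriminant formula'' are not legitimate citations as stated: each requires an argument extending the identity to non-classical parameters. The paper supplies exactly this. For the ODE, it observes that for fixed $n$ the coefficients of $P_n^{(-s(n-1)-1,-s(n-1)-1)}$ are polynomials in $s$, so the left-hand side of the Jacobi equation, evaluated at this polynomial and any fixed $t$, is a polynomial in $s$ that vanishes for all $s<0$ (the classical range) and hence vanishes identically. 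For the discriminant, its Lemma~\ref{Jdiscr} runs the same polynomial-identity argument in $(\alpha,\beta)$, under the proviso $\alpha+\beta\neq -n-k$, $k=1,\dots,n$ (where the leading coefficient of \eqref{Jacobi} degenerates); this proviso holds here because $s>1$ gives $2s(n-1)>2n-2$, and the same condition $(n-2s(n-1)-1)_n\neq 0$ is what makes your normalizing constant in \eqref{GJ} well defined. Once you insert these two continuation arguments, your proof is complete; without them, the key identifications rest on formulas used outside their stated domain of validity.
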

The value of the $n$th weighted diameter is found from the discriminant $\Delta_G$ of $G$, which is expressed via the discriminant of $P_n^{(-s(n-1)-1,-s(n-1)-1)}$, and from the value of $|G(ai)|$ given by
\begin{align}\label{wpart}
|G(ai)| &= \frac{(2a)^n n!}{(n-2s(n-1)-1)_n} P_n^{(-s(n-1)-1,-s(n-1)-1)}(1) \\ &= \frac{(2a)^n n!}{(n-2s(n-1)-1)_n} \frac{(-s(n-1))_n}{n!}
= \frac{(2a)^n (-s(n-1))_n}{(n-2s(n-1)-1)_n} \nonumber,
\end{align}
see (4.1.6) in \cite[p.~82]{Ismbook}. Indeed, by \eqref{wFekete} and $|G(ai)|^{-s}=\prod_{k=1}^d |ai-x_k|^{-s}=\prod_{k=1}^n |w(x_k)|$, we obtain
\begin{align}\label{dD}
\delta_n^w(\R) = |V(\cF_n^w)|^{\frac{2}{n(n-1)}} |G(ai)|^{-2s/n} = |\Delta_G|^{\frac{1}{n(n-1)}} |G(ai)|^{-2s/n}.
\end{align}
We will compute $|\Delta_G|$ explicitly in the proof of Theorem \ref{s>1}.

Polynomials \eqref{Fpols>1} arise in our work from the differential equation \eqref{diffeq} below. They appeared in the literature several times
under different names like pseudo-Jacobi, twisted Jacobi or Romanovski-Routh polynomials; see, for instance, \cite{JT},  \cite{koek}, \cite{kwon}, \cite{rap}, \cite{song},
and a recent survey \cite{Wong}. In particular, the polynomials $G_n=G$ of \eqref{Fpols>1}, with $a=1$, satisfy the recurrence relation
\begin{align}\label{recur}
  G_n(x) = x G_{n-1}(x) - \frac{(n-1)(2s(n-1)-n+3)}{(2s(n-1)-2n+3)(2s(n-1)-2n+5)} G_{n-2}(x), \quad n\ge 2,
\end{align}
and certain finite orthogonality relations on the real line with respect to the weight $w(x) = (x^2 + 1)^{-s(n-1)-1},\ x\in(-\infty,\infty).$
Non-Hermitian orthogonality relations for general Jacobi polynomials were studied in \cite{KMFO}.

Note that the problem of finding the weighted Fekete points is equivalent to finding the equilibrium distribution of unit discrete positive
charges located at $Z_n=\{z_k\}_{k=1}^n\subset\R$ in the external electrostatic field $Q(x) = - \log w(x) = s \log|x-ai|.$ Define the discrete
energy $\cE_w(Z_n)$ of these positive charges in presence of the external field $Q$, created by the fixed negative charge of magnitude
$s$ placed at point the $ai$, by
\[
\cE_w(Z_n) := - \frac{2}{n(n-1)}\sum_{1\le j<k\le n} \log|z_j-z_k| + \frac{2s}{n} \sum_{k=1}^{n} \log |z_k-ai|.
\]
It is clear that finding $\delta_n^w(\R)$ is equivalent to minimizing $\cE_w(Z_n)$:
\[
- \log \delta_n^w(\R) =\inf_{Z_n\subset \R} \cE_w(Z_n).
\]

We would like to mention that the results of Theorems \ref{s=1} and \ref{s>1} apply as well to somewhat more general weights $w(x)=c|x-z_0|^{-s}$,
where $c>0$ and $z_0\in\C\setminus\R,$ by using horizontal translation and scaling.

\section{Weighted energy problem on the real line} \label{EPR}

We now consider the matching case of the continuous weighted energy problem on the real line for the weights
\begin{align} \label{2.1}
w(x)=|x-ai|^{-s}, \qquad x \in\R,
\end{align}
where $s\ge 1$ and $a>0.$ The solution of this problem provides a value of the limit for $\delta_n^w(E)$ as $n\to\infty$, as well as the limit distribution
for the weighted Fekete points. A general treatment of potential theory with external fields, or weighted potential theory, is contained in the book of
Saff and Totik \cite{ST}, together with historical remarks and numerous references. Let ${\mathcal M}(\R)$ be the set of positive unit Borel measures
supported on $\R$. For any measure $\mu\in {\mathcal M}(\R)$ and weight $w$ of the form \eqref{2.1}, we define the energy functional
\begin{align} \label{2.2}
I_{w} (\mu)&:= \int\!\!\int \log \dis\frac{1}{|z-t|w(z)w(t)} \ d
\mu(z)d \mu(t) \\ \nonumber &= \int\!\!\int \log \dis
\frac{1}{|z-t|} \ d \mu(z)d \mu(t) - 2 \int \log w(t)\, d\mu(t),
\end{align}
and consider the minimum energy problem
\begin{equation} \label{2.3}
V_{w}:= \dis\inf_{\mu \in {\mathcal M}(\R)} I_{w}(\mu).
\end{equation}
If $s>1$ then Theorem I.1.3 of \cite{ST} yields that  $V_{w}$ is finite, and that there exists a unique equilibrium measure $\mu_{w} \in
{\mathcal{M}}(\R)$ such that $I_{w} (\mu_{w}) = V_{w}$. Thus $\mu_w$ minimizes the energy functional \eqref{2.2} in presence of
the external field $Q(x) = - \log w(x) = s \log|x-ai|.$ Furthermore, for the potential of $\mu_w$ we have
\begin{equation} \label{2.4}
U^{\mu_{w}}(x) + Q(x) \geq F_{w}, \qquad x \in \R,
\end{equation}
and
\begin{equation} \label{2.5}
U^{\mu_{w}}(x) + Q(x) = F_{w}, \qquad x \in S_w,
\end{equation}
where $$U^{\mu_w}(x):=-\dis\int\log|x-t|\,d\mu_w(t),
\>\>\> F_{w}:=V_{w} + \dis\int \log w(t) d\mu_{w}(t) \>\>\> \text{and} \>\>\> S_w:={\rm supp}\, \mu_{w}$$
(see Theorems I.1.3 and I.5.1 in \cite{ST}). We note that \eqref{2.4} and \eqref{2.5} for general weights and sets hold up to a possible
exceptional set of capacity zero, but in our case there is no exceptional set due to continuity of the weight and the associated equilibrium
potential $U^{\mu_{w}}$. The weighted capacity of $\R$ is defined by
\begin{equation} \label{2.6}
\textup{cap}(\R,w):=e^{-V_w}.
\end{equation}

The support $S_w$ plays a crucial role in determining the equilibrium measure $\mu_w$ itself, as well as some other components of
this weighted energy problem. Indeed, if $S_w$ is known, then $\mu_w$ can be found as a unique solution of the equation
\[
\int\log\frac{1}{|x-t|}\,d\mu(t) + s \log|x-ai| = F, \qquad x \in S_w,
\]
where $F$ is a constant (cf. \eqref{2.5} and Theorem 3.3 of \cite[Ch. I]{ST}). For $w$ given by \eqref{2.1}, this equation can be
solved by potential theoretic methods, using balayage techniques, so that $\mu_w$ is expressed as a linear combination of harmonic measures.
In this way, we obtain the following explicit solution of the minimum energy problem.

\begin{theorem} \label{thm2.1}
Let $w$ be defined by \eqref{2.1} with $a=1$ and $s>1$. The weighted equilibrium measure $\mu_w$ is supported on the compact interval
\[
S_w=\left[-\frac{\sqrt{2s-1}}{s-1}, \frac{\sqrt{2s-1}}{s-1}\right],
\]
and is given by
\begin{equation} \label{2.7}
d\mu_w(x) =\frac{\sqrt{2s-1-(s-1)^2x^2}}{\pi (1+x^2)}\, dx,\quad x\in S_w.
\end{equation}

Moreover, the weighted capacity is equal to
\begin{equation} \label{2.8}
\textup{cap}(\R,w) = 2^{2s-2s^2-1} s^{-s^2} (s-1)^{-(s-1)^2} (2s-1)^{(2s-1)^2/2}.
\end{equation}
\end{theorem}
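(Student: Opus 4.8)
The plan is to construct an explicit candidate measure by balayage and then invoke the Frostman characterization of the equilibrium measure (Theorem I.3.3 of \cite{ST}): because $s>1$, the measure $\mu_w$ exists and is unique, so it is enough to produce a unit measure $\mu$ and a constant $F$ with $U^{\mu}+Q=F$ on $\supp\mu$ and $U^{\mu}+Q\ge F$ on $\R$. Writing the external field as $Q(x)=s\log|x-i|=-U^{s\delta_i}(x)$, I set $K=[-L,L]$ with $L$ still free, denote by $\omega_K$ the classical equilibrium measure of $K$ and by $\widehat{\delta_i}$ the balayage of the point mass $\delta_i$ onto $K$, i.e.\ the harmonic measure of $i$ in $\Omega:=\overline{\C}\setminus K$. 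Then $\mu:=s\,\widehat{\delta_i}-(s-1)\omega_K$ is a signed measure of total mass $s-(s-1)=1$, and the balayage identity $U^{\widehat{\delta_i}}=U^{\delta_i}+g_\Omega(i,\infty)$ valid on $K$ makes $U^{\mu}+Q$ constant on $K$ for every choice of $L$. Thus $L$ is free to be used to turn $\mu$ into a genuine probability measure supported exactly on $K$.

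The next step is to compute the two densities explicitly via the Joukowski map $z=\tfrac{L}{2}(\zeta+\zeta^{-1})$ from $\{|\zeta|>1\}$ onto $\Omega$, under which harmonic measure becomes the exterior Poisson kernel. The point $i$ has preimage $\zeta_0=it$ with $t=(1+\sqrt{1+L^2})/L$, and adding the Poisson-kernel contributions from the two boundary preimages $e^{\pm i\theta}$ of a cut point $x=L\cos\theta$ gives, after simplification using $t-t^{-1}=2/L$,
\begin{equation*}
\frac{d\widehat{\delta_i}}{dx}=\frac{\sqrt{1+L^2}}{\pi(1+x^2)\sqrt{L^2-x^2}},\qquad\frac{d\omega_K}{dx}=\frac{1}{\pi\sqrt{L^2-x^2}},
\end{equation*}
where the factor $1+x^2=|x-i|^2$ appears automatically. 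Forming $s\,\widehat{\delta_i}-(s-1)\omega_K$ and requiring its numerator $s\sqrt{1+L^2}-(s-1)(1+x^2)$ to vanish at the endpoints $x=\pm L$ forces $\sqrt{1+L^2}=s/(s-1)$, hence $L=\sqrt{2s-1}/(s-1)$, and the density then collapses to $\sqrt{2s-1-(s-1)^2x^2}/(\pi(1+x^2))$, which is \eqref{2.7}. For this $L$ the quantity under the square root equals $(s-1)^2(L^2-x^2)$, so the density is nonnegative on $K$ and vanishes at $\pm L$, giving $\supp\mu=S_w$ as claimed. The one remaining Frostman condition, the inequality $U^{\mu}+Q\ge F$ on $\R\setminus K$, I would obtain from the nonnegativity of $\mu$ together with a monotonicity argument for $U^{\mu}+Q$ as $x$ moves away from the interval.

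For the capacity I would use $V_w=F_w+\int Q\,d\mu_w$. The constant $F_w=s\,g_\Omega(i,\infty)-(s-1)\log(2/L)$ is read off directly from the balayage relation on $K$ and from $U^{\omega_K}=\log(2/L)$ there, while $\int Q\,d\mu_w=-s\,U^{\mu_w}(i)$ follows from evaluating the potentials at the source point $i$: here $U^{\omega_K}(i)=\log(2/L)-g_\Omega(i,\infty)$ and $U^{\widehat{\delta_i}}(i)=g_\Omega(i,\infty)-\gamma_\Omega(i)$, where $\gamma_\Omega(i)$ is the Robin constant of $\Omega$ at $i$. From the conformal map one gets $g_\Omega(i,\infty)=\log t=\tfrac12\log(2s-1)$ and $\gamma_\Omega(i)=\log\big((|\zeta_0|^2-1)/|\phi'(i)|\big)=\log\big(2s/\sqrt{2s-1}\big)$, where $\phi$ is the inverse Joukowski map. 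Substituting these values and collecting the exponents of $2$, $s$, $s-1$ and $2s-1$ in $e^{-V_w}$ produces \eqref{2.8}.

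The main obstacle is twofold. First, one must correctly derive the harmonic-measure density and check that the endpoint condition yields precisely the interval $S_w$; the algebra is short only because the relation $t-t^{-1}=2/L$ makes the denominator factor as $t^2(1+x^2)$. Second, and more delicate, is the source-point evaluation of $U^{\widehat{\delta_i}}(i)$, which requires isolating the Robin constant $\gamma_\Omega(i)$ through the cancellation of the two logarithmic singularities of $-\log|z-i|$ and $g_\Omega(z,i)$; getting this constant right is what ultimately fixes every exponent in the capacity formula \eqref{2.8}.
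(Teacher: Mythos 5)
Your proposal is correct and follows essentially the same route as the paper's own proof: the same decomposition $\mu = s\,\widehat{\delta_i} - (s-1)\,\omega_K$ of the candidate measure into balayage of $\delta_i$ minus a multiple of the classical equilibrium measure, the same conformal-map computation of the harmonic-measure density (your Joukowski map is the inverse of the paper's $\Phi_1$), the same identification $L=\sqrt{2s-1}/(s-1)$ via vanishing of the density at the endpoints, and the same capacity computation $V_w = F_w + \int Q\,d\mu_w$ through evaluation of the potentials at the source point $i$ and the Robin constant $\gamma_\Omega(i)=\log\bigl(2s/\sqrt{2s-1}\bigr)$. The one step you leave as a sketch---the Frostman inequality $U^{\mu}+Q\ge F$ off the interval---is exactly where the paper writes $U^{\mu}(x)+Q(x)-F = (s-1)g_\Omega(x,\infty) - s\,g_\Omega(x,i)$ and verifies monotonicity in $y=\Phi_1(x)^2$ by an explicit derivative computation; be aware that nonnegativity of $\mu$ alone does not give this inequality (it genuinely uses the specific value of $L$, visible in the factor $(y-1)^2$ of that derivative), so your planned monotonicity argument is the essential ingredient, not a dispensable one.
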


Theorem 1.3 of \cite[Ch. III]{ST}) combined with the above result gives the following immediate consequence.

\begin{cor} \label{cor2.2}
Let $w$ be defined by \eqref{2.1} with $a=1$ and $s>1$. The weighted Fekete points $\{\zeta_k\}_{k=1}^n$, given by the roots of polynomial
\eqref{Fpols>1}, are distributed according to the measure $\mu_w$ of \eqref{2.7}, i.e.,
\begin{align*}
\tau_n := \frac{1}{n} \sum_{k=1}^{n} \delta_{\zeta_k}  \stackrel{*}{\rightarrow} \mu_w \quad \mbox{as }n\to\infty,
\end{align*}
where the above means the weak* convergence of the normalized counting measures $\tau_n.$

Moreover, the weighted $n$th diameters $\delta_n^w(\R)$ satisfy
\begin{equation*}
\lim_{n\to\infty} \delta_n^w(\R) = \textup{cap}(\R,w) = 2^{2s-2s^2-1} s^{-s^2} (s-1)^{-(s-1)^2} (2s-1)^{(2s-1)^2/2}.
\end{equation*}
\end{cor}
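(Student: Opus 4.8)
The plan is to derive both assertions directly from the general asymptotic theory of weighted Fekete points, using the explicit data already supplied by Theorem~\ref{thm2.1}. First I would verify that the hypotheses of Theorem~1.3 in \cite[Ch.~III]{ST} are met for $w(x)=|x-i|^{-s}$ with $a=1$ and $s>1$. The weight is continuous and positive on all of $\R$, and since $|x|\,w(x)=|x|\,|x-i|^{-s}\to 0$ as $|x|\to\infty$ for $s>1$, it is admissible in the sense of \cite[Ch.~I]{ST}. Consequently the continuous energy problem \eqref{2.3} has a finite value $V_w$ and a unique equilibrium measure $\mu_w$ with compact support, both of which were computed explicitly in Theorem~\ref{thm2.1}.

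With admissibility established, Theorem~1.3 of \cite[Ch.~III]{ST} asserts two things for any sequence of weighted Fekete point sets $\cF_n^w$: first, that the normalized counting measures $\tau_n=\frac{1}{n}\sum_{k=1}^n \delta_{\zeta_k}$ converge weak* to the equilibrium measure $\mu_w$, and second, that the weighted $n$th diameters satisfy $\lim_{n\to\infty}\delta_n^w(\R)=e^{-V_w}=\textup{cap}(\R,w)$. By Theorem~\ref{s>1}, the weighted Fekete points for $a=1$ and $s>1$ are precisely the unique roots $\{\zeta_k\}_{k=1}^n$ of the polynomial $G$ in \eqref{Fpols>1}, so both conclusions apply verbatim to these roots, and the uniqueness in Theorem~\ref{s>1} makes $\tau_n$ well defined.

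It then remains only to substitute the explicit formulas from Theorem~\ref{thm2.1}: the limiting measure is $\mu_w$ of \eqref{2.7} and the limiting diameter is $\textup{cap}(\R,w)$ of \eqref{2.8}, which yields both displayed statements. I would also record the consistency of normalizations, namely that $-\log\delta_n^w(\R)=\inf_{Z_n\subset\R}\cE_w(Z_n)\to V_w$ as $n\to\infty$ (using the pairing identity $\sum_{j<k}[\log w(z_j)+\log w(z_k)]=(n-1)\sum_k\log w(z_k)$ to rewrite the discrete energy), so that the convergence of diameters matches definition \eqref{2.6} of the weighted capacity.

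Since the substantive work — solving the continuous energy problem and identifying the discrete extremizers — is already carried out in Theorems~\ref{thm2.1} and~\ref{s>1}, I expect no serious obstacle. The only point requiring care is the unbounded setting: Theorem~1.3 of \cite[Ch.~III]{ST} must be invoked in its form for admissible weights on closed, possibly unbounded sets, and I would use the compactness of $S_w$ from Theorem~\ref{thm2.1} to confirm that the Fekete points accumulate on a bounded set, so that no mass escapes to infinity and the weak* limit is genuinely $\mu_w$.
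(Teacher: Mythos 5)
Your proposal is correct and follows exactly the paper's route: the paper derives Corollary~\ref{cor2.2} as an immediate consequence of Theorem~1.3 in \cite[Ch.~III]{ST} applied to the explicit equilibrium measure and capacity from Theorem~\ref{thm2.1}, with the Fekete points identified as the roots of \eqref{Fpols>1} via Theorem~\ref{s>1}. Your additional checks (admissibility of $w$ for $s>1$, consistency of the energy normalization, and compactness of $S_w$) are exactly the routine verifications the paper leaves implicit.
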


In the case $s=1$ and $w(x)=1/|x-i|$, we deal with the classical elliptic energy problem on the real line, see Tsuji \cite[pp. 89-94]{Tsu}.
The weighted (or elliptic) equilibrium measure is known in this case as the arctan distribution, see Theorem 3 in \cite{FPP}.
The fact of weak* convergence of the counting measures for the weighted Fekete points to the arctan distribution was directly observed
in Corollary 10 of \cite{DP}. These facts can be summarized as follows.

\begin{theorem} \label{thm2.3}
Let $w$ be defined by \eqref{2.1} with $a=1$ and $s=1$. Then the weighted equilibrium measure is given by
\begin{align} \label{2.11}
d\mu_w(x) = \frac{dx}{\pi(1+x^2)} ,\ x\in\R,
\end{align}
and the weighted capacity is $\textup{cap}(\R,w)=1/2$. Furthermore, any sequence of the weighted Fekete point sets $\{\zeta_k\}_{k=1}^n$ satisfies
\begin{align} \label{2.12}
\tau_n := \frac{1}{n} \sum_{k=1}^{n} \delta_{\zeta_k}  \stackrel{*}{\rightarrow} \frac{dx}{\pi(1+x^2)} \quad \mbox{as }n\to\infty.
\end{align}
\end{theorem}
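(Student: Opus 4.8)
The plan is to verify directly that the arctan distribution \eqref{2.11} satisfies the Frostman-type equilibrium conditions \eqref{2.4}--\eqref{2.5}, and then to establish minimality by a convexity estimate, since for $s=1$ the weight is not admissible and the existence/uniqueness theory of \cite{ST} does not apply verbatim. The central computation is the logarithmic potential of $\mu_w$. Writing $\mu_w$ as the balayage onto $\R$ of the unit point mass at $i$ from the upper half-plane (equivalently, as harmonic measure, whose density is the Poisson kernel $1/(\pi(1+x^2))$), or else by partial fractions and differentiation under the integral sign, I expect to obtain
\[
U^{\mu_w}(x) = -\int_{-\infty}^{\infty}\log|x-t|\,\frac{dt}{\pi(1+t^2)} = -\tfrac12\log(1+x^2), \qquad x\in\R,
\]
where the additive constant is pinned down by evaluating at $x=0$ via $\int_0^\infty \frac{\log t}{1+t^2}\,dt=0$. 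Since $Q(x)=\log|x-i|=\tfrac12\log(1+x^2)$ when $s=1$, this yields $U^{\mu_w}(x)+Q(x)\equiv 0$ on all of $\R$, so that both \eqref{2.4} and \eqref{2.5} hold with $F_w=0$ and $S_w=\R$.

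With the equilibrium identity in hand, minimality follows from the standard convexity argument: for any competitor $\mu\in\mathcal{M}(\R)$ of finite energy, set $\nu=\mu-\mu_w$, a signed measure of total mass zero, and expand
\[
I_w(\mu)-I_w(\mu_w)=\iint\log\frac{1}{|z-t|}\,d\nu(z)\,d\nu(t)+2\int\big(U^{\mu_w}+Q\big)\,d\nu.
\]
The last integral vanishes because $U^{\mu_w}+Q\equiv F_w$ and $\nu(\R)=0$, while the quadratic term is nonnegative by the conditional positive-definiteness of the logarithmic kernel on mass-zero measures. Hence $\mu_w$ is the unique minimizer. To obtain the capacity I would compute $V_w=I_w(\mu_w)=\int U^{\mu_w}\,d\mu_w-2\int\log w\,d\mu_w$ using $\int_{-\infty}^\infty\frac{\log(1+x^2)}{1+x^2}\,dx=2\pi\log2$ (through $x=\tan\theta$ and $\int_0^{\pi/2}\log\cos\theta\,d\theta=-\tfrac\pi2\log2$); this gives $\int U^{\mu_w}\,d\mu_w=-\log2$ and $\int\log w\,d\mu_w=-\log2$, so $V_w=\log2$ and therefore $\textup{cap}(\R,w)=e^{-V_w}=1/2$.

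For the weak* convergence \eqref{2.12}, I would invoke the explicit description from Theorem~\ref{s=1}: with $a=1$ the Fekete points are $x_k=\tan(\gamma+k\pi/n)$ for $k=0,\dots,n-1$. Since $\gamma\in(-\pi/2,-\pi/2+\pi/n)$, the angles $\gamma+k\pi/n$ lie in $(-\pi/2,\pi/2)$ and are equally spaced with gap $\pi/n$, hence equidistribute with respect to the uniform measure of density $1/\pi$ on $(-\pi/2,\pi/2)$ as $n\to\infty$. Pushing this measure forward under $x=\tan\theta$, where $d\theta=dx/(1+x^2)$, produces exactly the arctan density $\frac{dx}{\pi(1+x^2)}$, so $\tau_n\stackrel{*}{\rightarrow}\mu_w$. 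Alternatively this is Corollary~10 of \cite{DP}, or the $s\to1^+$ limit of Corollary~\ref{cor2.2}.

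The main obstacle is the borderline nature of $s=1$: the field $Q(x)\sim\log|x|$ grows exactly as fast as $-U^\mu(x)$ decays, so the weight fails to be admissible, the support is all of $\R$ rather than a compact interval, and the existence theorem cannot be quoted directly. The care is therefore concentrated in justifying the convexity argument on an unbounded set, namely the finiteness of $I_w(\mu_w)$ and the conditional positive-definiteness of the logarithmic kernel, which is precisely where Tsuji's elliptic potential theory \cite{Tsu} supplies the rigorous framework. As a consistency check, letting $s\to1^+$ in Theorem~\ref{thm2.1} the support $[-\sqrt{2s-1}/(s-1),\,\sqrt{2s-1}/(s-1)]$ expands to $\R$, the density \eqref{2.7} tends pointwise to \eqref{2.11}, and \eqref{2.8} tends to $1/2$.
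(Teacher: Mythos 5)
Your proposal is correct, but it follows a genuinely different route from the paper for a simple reason: the paper does not prove Theorem \ref{thm2.3} at all. It is stated as a summary of known results, with the elliptic energy framework and the arctan equilibrium measure attributed to Tsuji \cite{Tsu} and Theorem 3 of \cite{FPP}, and the weak* convergence of the weighted Fekete points to Corollary 10 of \cite{DP}; no argument for it appears in Section \ref{Proofs}. What you supply instead is a self-contained verification: the potential identity $U^{\mu_w}(x)=-\tfrac12\log(1+x^2)$, hence $U^{\mu_w}+Q\equiv 0=F_w$ on $\R$; minimality via the mass-zero convexity expansion; the evaluation $V_w=\log 2$ from $\int_{-\infty}^{\infty}\frac{\log(1+x^2)}{1+x^2}\,dx=2\pi\log 2$, giving $\textup{cap}(\R,w)=1/2$; and equidistribution of the explicit Fekete points of Theorem \ref{s=1} by pushing the equally spaced angles $\gamma+k\pi/n$ forward under $\tan$ (a clean Riemann-sum argument, which is exactly the content of the cited Corollary 10 of \cite{DP}). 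All your computations check out, including the $s\to 1^+$ consistency with Theorem \ref{thm2.1}. The genuine care point — which you correctly identify rather than gloss over — is that for $s=1$ the weight is inadmissible, so Theorems I.1.3 and III.1.3 of \cite{ST} cannot be quoted: one must justify that $I_w$ is well defined (this follows because the elliptic kernel $\log\frac{|z-i|\,|t-i|}{|z-t|}$ is bounded below on $\R\times\R$, by $-\log 2$) and that the logarithmic energy of a signed, possibly non-compactly supported, measure of total mass zero is nonnegative; this is where Tsuji's elliptic potential theory is genuinely needed. Modulo making those two standard facts precise, your argument is complete, and it buys a proof where the paper offers only citations.
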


\section{Weighted Fekete points on the unit circle} \label{FPC}

We now consider weighted Fekete points for $w(z)=1/|z-b|$, with $b\in\R,\ b\neq\pm 1,$ on the unit circumference $\T:=\{z:|z|=1\}.$

\begin{theorem} \label{FpT}
Let $b\in\R,\ b\neq\pm 1,$ and $n \geq 2$. Any weighted Fekete point set $\cF_n^w$ for $w(z)=1/|z-b|$ on $\T$ is the image of $n$ equally spaced points
on $\T$ under the mapping
\begin{align}\label{map}
\phi(w):=\frac{bw-1}{w-b},
\end{align}
i.e., it has the form
\begin{equation}\label{wFpT}
\cF_n^w = \{\zeta_1,\dots,\zeta_n\} = \{\phi\left(e^{i(\alpha + 2\pi k/n)}\right),\quad k=0,1,\ldots,n-1\},
\end{equation}
where $\alpha\in [0, 2\pi/n)$. Moreover, we have
\begin{equation}\label{wDiamT}
\delta_n^w(\T) = \frac{n^{1/(n-1)}}{|1-b^2|}.
\end{equation}
\end{theorem}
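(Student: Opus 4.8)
The plan is to exploit the Möbius transformation $\phi(w)=(bw-1)/(w-b)$ as a change of variables that converts the weighted Fekete problem on $\T$ into the classical \emph{unweighted} Fekete problem on $\T$, whose solution is already recalled in the introduction. First I would record the elementary properties of $\phi$. Since $b$ is real, computing $|bw-1|^2=b^2-b(w+\bar w)+1$ and $|w-b|^2=1-b(w+\bar w)+b^2$ for $|w|=1$ shows $|\phi(w)|=1$, so $\phi$ maps $\T$ onto itself; in fact $\phi$ is an involution (one checks $\phi(\phi(w))=w$), with $\phi(b)=\infty$ and $\phi(\infty)=b$. Consequently $\phi$ is a bijection of $\T$, and an $n$-tuple $\zeta_j\in\T$ ranges over all configurations on $\T$ exactly when its preimages $w_j:=\phi(\zeta_j)$ do.

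The heart of the argument is two algebraic identities. Writing $\zeta_j=\phi(w_j)$, a short computation of the numerator of $\phi(w_j)-\phi(w_k)$ gives
\[
\zeta_j-\zeta_k=\frac{(1-b^2)(w_j-w_k)}{(w_j-b)(w_k-b)}\qquad\text{and}\qquad \zeta_j-b=\frac{b^2-1}{w_j-b}.
\]
The second identity yields $w(\zeta_j)=1/|\zeta_j-b|=|w_j-b|/|1-b^2|$. Multiplying, each factor of the weighted Vandermonde collapses:
\[
|\zeta_j-\zeta_k|\,w(\zeta_j)\,w(\zeta_k)=\frac{|1-b^2|\,|w_j-w_k|}{|w_j-b|\,|w_k-b|}\cdot\frac{|w_j-b|\,|w_k-b|}{|1-b^2|^2}=\frac{|w_j-w_k|}{|1-b^2|}.
\]
Taking the product over all $\binom{n}{2}$ pairs shows that the weighted Vandermonde of $\{\zeta_j\}$ equals $|1-b^2|^{-n(n-1)/2}$ times the ordinary Vandermonde $|V(\{w_j\})|$.

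Since $\phi$ is a bijection of $\T$, maximizing the weighted product over $\zeta_j\in\T$ is therefore equivalent to maximizing $|V(\{w_j\})|$ over $w_j\in\T$. The latter is the unweighted Fekete problem: by Hadamard's inequality $|V|\le n^{n/2}$ on $\T$, and equality forces the rows of the Vandermonde matrix to be orthogonal, i.e.\ $w_j/w_k$ to be a nontrivial $n$th root of unity for all $j\neq k$, so the maximizers are \emph{exactly} the rotated regular $n$-gons $w_j=e^{i(\alpha+2\pi j/n)}$. Pulling these back through $\phi$ gives precisely the asserted form \eqref{wFpT}, with the rotation parameter $\alpha\in[0,2\pi/n)$ accounting for the non-uniqueness. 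For the diameter I would use the value $\prod_{j<k}|w_j-w_k|=n^{n/2}$ attained at the regular $n$-gon (equivalently $|\mathrm{disc}(z^n-1)|=n^n$), whence
\[
\delta_n^w(\T)=\left(\frac{n^{n/2}}{|1-b^2|^{n(n-1)/2}}\right)^{\frac{2}{n(n-1)}}=\frac{n^{1/(n-1)}}{|1-b^2|},
\]
which is \eqref{wDiamT}. The only genuinely creative step is guessing the conjugating map $\phi$, chosen precisely so that it sends the weight's singularity $b$ to $\infty$; once this map is in hand, the result follows from the two displayed identities together with the classical circle computation, so I do not expect a serious obstacle beyond organizing these calculations cleanly.
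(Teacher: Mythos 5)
Your proposal is correct and follows essentially the same route as the paper's own proof: the same M\"obius change of variables $\phi$, the same two identities for $|\zeta_j-\zeta_k|$ and $|\zeta_j-b|$ collapsing the weighted Vandermonde to $|1-b^2|^{-n(n-1)/2}\prod_{j<k}|w_j-w_k|$, and the same reduction to the classical unweighted Fekete problem on $\T$ solved by Hadamard's inequality. The only cosmetic differences are your extra observation that $\phi$ is an involution and your pairwise grouping of the weights, versus the paper's use of the exponent $2(n-1)$ per point; both are equivalent readings of \eqref{wFekete}.
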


It is clear that the weighted Fekete points problem for a more general weight $w(z)=A/|z-z_0|$ on a circle $C$, with $z_0\not\in C$ and $A>0$, can be reduced to the
above case by scaling and rotating.

\section{Weighted energy problem on the unit circle} \label{EPC}

In principle, the solution of the continuous version of the weighted energy problem on $\T$ for $w(z)=1/|z-b|,$ where $b\in\R,\ b\neq\pm 1,$ can
be obtained by passing to the limit in Theorem \ref{FpT} as $n\to\infty.$ However, we give a direct proof of the following result:

\begin{theorem} \label{thm4.1}
Let $w(z)=1/|z-b|,$ where $b\in\R,\ b\neq\pm 1$. Then the weighted equilibrium measure for this weight on $\T$ is given by
\begin{align} \label{4.1}
d\mu_w(e^{it}) = \frac{|1-b^2|\,dt}{2\pi(1 - 2b\cos t + b^2)} ,\ t\in[0,2\pi),
\end{align}
and the weighted capacity is
\begin{align} \label{4.2}
\textup{cap}(\T,w)=\frac{1}{|1-b^2|}.
\end{align}
Furthermore, any sequence of the weighted Fekete point sets $\{\zeta_k\}_{k=1}^n$ satisfies
\begin{align} \label{4.3}
\tau_n := \frac{1}{n} \sum_{k=1}^{n} \delta_{\zeta_k}  \stackrel{*}{\rightarrow} \frac{|1-b^2|\,dt}{2\pi(1 - 2b\cos t + b^2)} \quad \mbox{on $\T$ as }n\to\infty.
\end{align}
\end{theorem}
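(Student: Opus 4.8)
The plan is to produce the equilibrium measure by an educated guess and then verify it through the Frostman-type variational conditions \eqref{2.4}--\eqref{2.5}, which characterize $\mu_w$ uniquely (the uniqueness part of Theorem I.1.3 together with the sufficiency of these conditions, Theorem I.3.3 of \cite{ST}). The density in \eqref{4.1} is exactly the Poisson kernel of the unit disk $\D$, i.e.\ the harmonic measure of $\T$ seen from an interior point: for $|b|<1$ it is the harmonic measure $\omega_b$ at $b$, while for $|b|>1$ the same density equals the harmonic measure $\omega_{1/b}$ at the reflected point $1/b\in\D$ (a one-line check after multiplying numerator and denominator by $b^2$). Thus I would set $c:=b$ if $|b|<1$ and $c:=1/b$ if $|b|>1$, so that $|c|<1$ and $\mu_w=\omega_c$ in both cases; since $b\neq\pm 1$ the kernel is bounded and strictly positive on $\T$, so $\mu_w$ is a probability measure with $\supp\mu_w=\T$ and finite energy $I_w(\mu_w)$.

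The key step is to evaluate the effective potential $U^{\mu_w}(z)+Q(z)$, where $Q(z)=\log|z-b|$, on $\T$. For this I would invoke the reproducing property of harmonic measure: a function harmonic in $\D$ and continuous on $\overline{\D}$ integrates against $\omega_c$ to its value at $c$. Applying this to $w\mapsto\log|z-w|$ for fixed $|z|>1$ (harmonic in $w\in\overline{\D}$) gives $\int_{\T}\log|z-e^{it}|\,d\omega_c=\log|z-c|$, hence $U^{\omega_c}(z)=-\log|z-c|$ for $|z|\ge 1$ after letting $|z|\downarrow 1$ and using continuity of $U^{\omega_c}$ across $\T$. Therefore on $\T$ one gets $U^{\mu_w}(z)+Q(z)=-\log|z-c|+\log|z-b|$. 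For $|b|<1$ this is identically $0$; for $|b|>1$ the circle identity $|z-b|=|b|\,|z-1/b|$ (valid for $z\in\T$) makes it the constant $\log|b|$. In both cases the sum is constant on $\T=\supp\mu_w$, so \eqref{2.5} holds with $F_w=\max(0,\log|b|)$, and \eqref{2.4} holds trivially since equality is attained everywhere on $\T$. The uniqueness of the equilibrium measure then identifies $\mu_w$ as claimed in \eqref{4.1}.

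For the capacity \eqref{4.2} I would start from $F_w=V_w+\int\log w\,d\mu_w$, which rearranges to $V_w=F_w+\int_{\T}\log|z-b|\,d\mu_w(z)$. When $|b|>1$ the point $b$ lies outside $\overline{\D}$, so $w\mapsto\log|w-b|$ is harmonic there and the reproducing property gives $\int_{\T}\log|z-b|\,d\omega_{1/b}=\log|1/b-b|=\log|1-b^2|-\log|b|$, whence $V_w=\log|b|+\log|1-b^2|-\log|b|=\log|1-b^2|$. When $|b|<1$ the integrand is singular inside $\D$, so instead I would use the harmonic extension $U^{\omega_b}(z)=-\log|1-bz|$ in $\D$ (it matches the boundary values $-\log|z-b|$, since $|1-bz|=|z-b|$ on $\T$ for real $b$), obtaining $\int_{\T}\log|z-b|\,d\omega_b=-U^{\omega_b}(b)=\log(1-b^2)=\log|1-b^2|$ and again $V_w=\log|1-b^2|$. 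Hence $\textup{cap}(\T,w)=e^{-V_w}=1/|1-b^2|$, consistent with the limit of \eqref{wDiamT} as $n\to\infty$.

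Finally, the weak$^*$ convergence \eqref{4.3} would not be proved by hand but would follow from the general discretization result, Theorem 1.3 of \cite[Ch. III]{ST}, applied on the compact set $\T$: since $w$ is continuous and strictly positive on $\T$ (here $b\neq\pm1$ is used again), the normalized counting measures of the weighted Fekete points converge weak$^*$ to the weighted equilibrium measure $\mu_w$ of \eqref{4.1}. Alternatively, one could pass to the limit in the explicit description \eqref{wFpT} by pushing the equidistributing points $e^{i(\alpha+2\pi k/n)}$ forward by the map $\phi$, but the citation is cleaner. The main obstacle I anticipate is essentially bookkeeping: getting the potential identities and the constant $F_w$ right across the two regimes $|b|<1$ and $|b|>1$, since the integrand in the capacity computation is harmonic in $\overline{\D}$ in one case but singular inside $\D$ in the other, which forces two slightly different evaluations of $\int_{\T}\log|z-b|\,d\mu_w$.
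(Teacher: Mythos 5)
Your proposal is correct, and it follows the paper's high-level strategy (guess the Poisson-type measure, verify the Frostman conditions \eqref{2.4}--\eqref{2.5} with a constant on all of $\T$, invoke Theorem I.3.3 of \cite{ST} for identification, compute $V_w$, and cite Theorem 1.3 of \cite[Ch. III]{ST} for \eqref{4.3}), but the verification itself uses genuinely different tools. The paper keeps the pole at $b$ and reads \eqref{4.1} as the balayage of $\delta_b$ onto $\T$ from the component of $\overline{\C}\setminus\T$ containing $b$: for $|b|>1$ it works with the exterior harmonic measure $\omega_{\Delta}(b,\cdot)$ and for $|b|<1$ with $\omega_{\D}(b,\cdot)$, and it obtains both the constancy of $U^{\mu}(z)+\log|z-b|$ on $\T$ and the value of $U^{\mu}(b)$ from the balayage identities it cites in \cite{ST} (Theorems 4.1, 4.4 and 5.1 of Ch.~II) together with the explicit Green functions $g_{\Delta}(z,\infty)=\log|z|$ and $g(z,b)=\log\bigl(|1-bz|/|z-b|\bigr)$. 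You instead reflect $b$ to $c=1/b$ when $|b|>1$, so that in both regimes the candidate is an interior harmonic measure $\omega_c$ with $c\in\D$; the constancy on $\T$ then follows from the Poisson reproducing property (giving $U^{\omega_c}(z)=-\log|z-c|$ for $|z|\ge 1$, using continuity of the potential across $\T$) plus the circle identity $|z-b|=|b|\,|z-1/b|$, and the energy is evaluated either by the reproducing property again (for $|b|>1$) or by matching boundary values of the harmonic function $-\log|1-bz|$ via the maximum principle (for $|b|<1$). Your route is more elementary and essentially self-contained, requiring no Green functions and no balayage theorems beyond the characterization of $\mu_w$; the paper's balayage formulation is heavier but transfers to conductors where no explicit reflection is available --- indeed it is exactly the scheme the paper reuses in its proof of Theorem \ref{thm2.1} on the real line. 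Both arguments produce the same constants, $F_w=0$ or $\log|b|$ and $V_w=\log|1-b^2|$, hence the same capacity \eqref{4.2}.
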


It is obvious from \eqref{wDiamT} and \eqref{4.2} that the weighted $n$th diameters $\delta_n^w(\T)$ satisfy
\begin{equation*}
\lim_{n\to\infty} \delta_n^w(\T) = \textup{cap}(\T,w)=\frac{1}{|1-b^2|}.
\end{equation*}

We remark that the weighted energy problem on the unit circle for general weights was studied in \cite{Pri}. Those results can
be  also applied to obtain a version of Theorem \ref{thm4.1}.

\section{Proofs} \label{Proofs}

\subsection{Proofs for Section \ref{FPR}}

We start with an estimate for a certain product of sine functions contained in \cite{DP}. For convenience, we will give a short proof.

\begin{lemma}\label{prodsin}
For any $y_1,\dots,y_n \in (-\pi/2,\pi/2]$, we have
\begin{equation}\label{cg2}
\prod_{1 \leq j<k \leq d} \sin^2 (y_j-y_k) \leq 2^{-n(n-1)} n^n.
\end{equation}
Furthermore, equality in \eqref{cg2} is attained if and only if the set $\{y_1,\dots,y_n\}$ is an arithmetic progression
with difference $\pi/n$.
\end{lemma}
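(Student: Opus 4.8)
The plan is to prove the inequality $\prod_{1\le j<k\le n}\sin^2(y_j-y_k)\le 2^{-n(n-1)}n^n$ by a symmetrization/rearrangement argument centered on a clever substitution. The key observation is that $|\sin(y_j-y_k)|$ is, up to normalization, the chordal distance between the points $e^{2iy_j}$ and $e^{2iy_k}$ on the unit circle. Indeed, writing $u_k:=e^{2iy_k}$ for $k=1,\dots,n$, one has $|u_j-u_k|=|e^{2iy_j}-e^{2iy_k}|=2|\sin(y_j-y_k)|$, so the product becomes
\begin{equation}\label{plan-chord}
\prod_{1\le j<k\le n}\sin^2(y_j-y_k)=4^{-\binom{n}{2}}\prod_{1\le j<k\le n}|u_j-u_k|^2=2^{-n(n-1)}\,|V(\{u_k\})|^2.
\end{equation}
Since the $y_k$ range over $(-\pi/2,\pi/2]$, the points $u_k$ range over all of $\T$, and they are distinct precisely when the $y_k$ are distinct. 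Thus the problem reduces exactly to the unweighted Fekete problem on $\T$: maximizing $|V(\{u_k\})|^2$ over $n$-point configurations on the unit circle. This is precisely the classical fact, cited already in the introduction via Hadamard's inequality and the regular-$n$-gon example.

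First I would make \eqref{plan-chord} precise and then invoke the known sharp bound $\max_{u_1,\dots,u_n\in\T}|V(\{u_k\})|^2=n^n$. The cleanest self-contained route to this bound is through Hadamard's inequality applied to the Vandermonde matrix $M=(u_k^{j-1})_{1\le j,k\le n}$: since each column has Euclidean norm $\sqrt{n}$ (because $|u_k^{j-1}|=1$), Hadamard gives $|\det M|^2\le\prod_{k=1}^n\|\text{col}_k\|^2=n^n$, and $|\det M|=|V(\{u_k\})|$. Combining with \eqref{plan-chord} immediately yields the claimed upper bound $2^{-n(n-1)}n^n$.

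The genuinely delicate part is the equality characterization, so I would spend most of the effort there. Equality in Hadamard's inequality forces the columns of $M$ to be pairwise orthogonal, i.e. $\sum_{k=1}^n u_k^{\,p}=0$ for $p=1,\dots,n-1$ (taking differences of column indices). These power-sum conditions, via Newton's identities, force the elementary symmetric functions $e_1,\dots,e_{n-1}$ of the $u_k$ to vanish, so the $u_k$ are exactly the roots of $z^n=e_n$ for some $e_n$ with $|e_n|=1$ (the modulus being fixed since each $|u_k|=1$). That is, the $u_k$ are the $n$-th roots of some unimodular constant, hence equally spaced on $\T$; translating back through $u_k=e^{2iy_k}$ shows the $y_k$ form an arithmetic progression with common difference $\pi/n$ modulo $\pi$. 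I expect the main obstacle to be the bookkeeping in converting the ``equally spaced on $\T$'' condition back into the ``arithmetic progression with difference $\pi/n$'' statement on the $y_k$: one must track that the doubling map $y\mapsto 2y$ sends $(-\pi/2,\pi/2]$ bijectively onto $\T$, so the spacing $2\pi/n$ on the circle corresponds to spacing $\pi/n$ in the $y$-variable, and one must confirm the progression structure is preserved (rather than merely the set of spacings). I would also verify the converse—that an arithmetic progression with difference $\pi/n$ does attain equality—by direct substitution, which is routine once the forward direction is established.
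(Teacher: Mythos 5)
Your proposal is correct and takes essentially the same route as the paper: the substitution $u_k=e^{2iy_k}$ turns the product into $2^{-n(n-1)}$ times the squared Vandermonde determinant of $n$ points on the unit circle, which is bounded by $n^n$ via Hadamard's inequality, with equality exactly for equally spaced points. The only real difference is that the paper cites \cite{BC} and \cite{Ger} for this sharp bound and its equality case while you prove them inline; note one harmless transposition in your equality argument --- the power sums $\sum_{k=1}^n u_k^{\,p}=0$, $p=1,\dots,n-1$, express orthogonality of the \emph{rows} of $M=(u_k^{j-1})_{j,k}$, not of its columns as written, but this is immaterial since the row and column Hadamard bounds coincide at $n^{n/2}$ so equality forces both (and in fact column orthogonality gives $(u_k/u_l)^n=1$, i.e.\ $u_k^n=u_l^n$ for all $k,l$, an even shorter path to equal spacing than Newton's identities).
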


\begin{proof}
Rearranging the elements $y_k$ in ascending order, we may assume that $-\pi/2 < y_1 < y_2 < \dots < y_n \le \pi/2$.
Notice that $$2\sin (y_k-y_j) =|e^{2i y_k} -e^{2i y_j}|$$
for any pair of indices $j<k$ satisfying $1 \leq j<k \leq n$. Hence
$$2^{n(n-1)} \prod_{1 \leq j<k \leq n} \sin^2 (y_j-y_k) =\prod_{1 \leq j<k \leq n} |e^{2iy_k}-e^{2i y_j}|^2.$$
Here, the product on the right hand side is the square of the absolute value of the Vandermonde determinant for $e^{2i y_1}, e^{2i y_2}, \dots, e^{2i y_n}$.
It is well known that the maximum of the latter does not exceed $n^n$, with equality iff the points $e^{2i y_k}$ for $k=1,\dots,n,$ are equally spaced on the unit
circle, by Hadamard's inequality, cf. \cite{BC}. See also \cite{Ger} for an alternative proof of this fact due to Fekete. This implies the assertion of the lemma.
\end{proof}

\begin{proof}[Proof of Theorem~\ref{s=1}]
Since $a>0$, we can write any set of points $X_n=\{x_k\}_{k=1}^n\in\R$ in the form $x_k=a \tan y_k$, where
$y_k \in (-\pi/2,\pi/2)$ for $k=1,\dots,n$. It follows that
$$\prod_{k=1}^n w(x_k)^2 = \prod_{k=1}^n (a^2+a^2 \tan^2 y_k)^{-1} = a^{-2n} \prod_{k=1}^n \cos^2 y_k.$$
The square of the Vandermonde determinant for these points takes the form
\begin{align*}
V(X_n)^2 &= \prod_{1 \leq j < k \leq n} (a \tan y_j -a \tan y_k)^2=
a^{n(n-1)} \prod_{1 \leq j < k \leq n} \frac{\sin^2 (y_j-y_k)}{\cos^2 y_j \cos^2 y_k} \\
&= a^{n(n-1)} \Big(\prod_{k=1}^n \frac{1}{\cos^2 y_k}\Big)^{n-1} \prod_{1 \leq j<k \leq n} \sin^2 (y_j-y_k).
\end{align*}
Hence
$$
V(X_n)^2 \prod_{k=1}^n w(x_k)^{2(n-1)} = a^{-n(n-1)} \prod_{1 \leq j<k \leq n} \sin^2 (y_j-y_k).
$$
Bounding the right hand side by Lemma~\ref{prodsin}, we find that
\begin{equation}\label{loi1}
V(X_n)^2 \prod_{k=1}^n w(x_k)^{2(n-1)} \leq (2a)^{-n(n-1)} n^n.
\end{equation}
By Lemma~\ref{prodsin}, equality in \eqref{loi1} holds iff $\{y_1,\dots,y_n\} \in (-\pi/2,\pi/2)$ is an arithmetic progression with difference $\pi/n$,
so that \eqref{wDiam} follows after taking an appropriate root of both sides. Note that from Lemma~\ref{prodsin} it follows that the weighted Fekete points are given by
$$
\{\zeta_1,\dots,\zeta_n\}=\{a\tan\gamma, a\tan(\gamma+\pi/n), \ldots,a\tan(\gamma+(n-1)\pi/n)\}
$$
for some $\gamma \in (-\pi/2,-\pi/2+\pi/n)$ as claimed in \eqref{wFpoints}.

Set
\begin{equation*}\label{ff1}
R(x):=\prod_{k=0}^{n-1} (x-a \tan (\gamma+\pi k/n)).
\end{equation*}
To prove \eqref{Fpol} we need to show that $R(ax)=F(ax)$. Since $B=an \cot(n\pi/2+n \gamma)$ by \eqref{bbb} and identity (432) of \cite[pp. 80-81]{Jol},
this is equivalent to
$$\prod_{k=0}^{n-1} (x-\tan(\gamma+k\pi/n)) = \frac{\big(1-i\cot\big(\frac{\pi n}{2}+\gamma n\big)\big)(x+i)^n+\big(1+i\cot\big(\frac{\pi n}{2}+\gamma n \big)\big)(x-i)^n}{2}.$$

We will show that this is an identity that holds for each $ x \in \C$ and all $\gamma \in \C$ for which the involved tangent and cotangent functions are defined.
Indeed, both sides are monic polynomials in $x$  of degree $n$, so it suffices to show that the right hand side vanishes at
$x=\tan(\gamma+k\pi/n)$, $k=0,1,\dots,n-1$.
Let us insert $x=\tan(\gamma+k\pi/n)$ into the right hand side and multiply it by $i^{1-d} \sin(n\pi/2+n\gamma) \cos^n (\gamma+k\pi/n)$. Since
$$i\sin(n\pi/2+n\gamma)(1 \mp i\cot(n\pi/2+n\gamma))=i\sin(n\pi/2+n\gamma) \pm \cos(n\pi/2+n\gamma),$$
and
$$i^{-n}\cos^n(\gamma+k\pi/n) \big (\tan(\gamma+k\pi/n)\pm i\big)^n= \big(-i\sin(\gamma+k\pi/n) \pm \cos(\gamma+k\pi/n)\big)^n,$$
it remains to verify that
$$e^{i(n\pi/2+n\gamma)} e^{-i(k\pi+n\gamma)}-e^{-i(n\pi/2+n\gamma)} (-1)^n e^{i(k\pi+n\gamma)}=0.$$
This equality clearly holds for each $k \in \Z$, since its left hand side equals
$$e^{i \pi(n/2-k)}-(-1)^n e^{i\pi (k - n/2)}=e^{i \pi(n-d/2)}
\big(e^{i \pi (n -2k)}-(-1)^n\big) = e^{i\pi (k-n/2)}
\big(e^{i \pi n}-(-1)^n\big)=0.$$
This completes the proof of \eqref{Fpol}.
\end{proof}

The following lemma is extracted from the proofs of Theorems 12 and 13 in \cite{DP}. We give a short proof for completeness.

\begin{lemma}\label{prodsin2}
Let  $a>0$ and $n\in\N.$ If $f$ is a monic polynomial of degree $n$ satisfying the differential equation
\begin{equation}\label{diff10}
(x^2+a^2)f''(x)-\lambda x f'(x)+n(\lambda-n+1)f(x)=0,
\end{equation}
where $\lambda\neq n-1,n,\dots,2n-2$, then
\begin{align}\label{Fpols>1-1}
  f(x) = x^n + \sum_{k=1}^{\lfloor n/2 \rfloor} \left((-1)^ka^{2k} {n\choose 2k} \frac{(2k-1)!!}{\prod_{j=1}^{k} (\lambda-2n+2j+1)} \right) x^{n-2k}.
\end{align}
\end{lemma}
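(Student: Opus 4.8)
The plan is to determine $f$ by undetermined coefficients, substituting a general monic polynomial into \eqref{diff10} and reading off a recurrence for its coefficients. I would write $f(x)=\sum_{m=0}^{n} c_m x^m$ with $c_n=1$ and $c_m=0$ for $m>n$. Substituting into \eqref{diff10} and collecting the coefficient of $x^m$ for $0\le m\le n$, the contributions of $x^2 f''$, $-\lambda x f'$ and $n(\lambda-n+1)f$ each produce a multiple of $c_m$, while the $a^2 f''$ term produces the shifted contribution $a^2(m+1)(m+2)c_{m+2}$. This yields the two-term relation
\[
P(m)\,c_m + a^2 (m+1)(m+2)\,c_{m+2}=0, \qquad P(m):=m(m-1)-\lambda m + n(\lambda-n+1).
\]

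The key step is to factor $P(m)=(m-n)\,(m-\lambda+n-1)$, which I would verify from the sum $\lambda+1$ and product $n(\lambda-n+1)$ of its roots. Two consequences follow at once. First, $P(n)=0$, so the relation at $m=n$ is automatically satisfied and a monic degree-$n$ solution is not obstructed; this is precisely the eigenvalue condition encoded in the constant $n(\lambda-n+1)$. Second, for $0\le m\le n-1$ one has $m-n\neq 0$, so $P(m)=0$ exactly when $m=\lambda-n+1$, i.e. exactly when $\lambda\in\{n-1,n,\dots,2n-2\}$. Hence the hypothesis $\lambda\neq n-1,n,\dots,2n-2$ guarantees $P(m)\neq 0$ for every $m\in\{0,1,\dots,n-1\}$, so the recurrence determines each $c_m$ uniquely from $c_{m+2}$, and $f$ is uniquely pinned down once $c_n=1$ is fixed.

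Starting from $c_{n+1}=0$, the relation forces $c_{n-1}=0$ (using $P(n-1)\neq 0$, which is the case $\lambda\neq 2n-2$), and inductively every coefficient whose index has opposite parity to $n$ vanishes. For the surviving indices I set $m=n-2k$ and use $P(n-2k)=2k\,(\lambda-2n+2k+1)$ to rewrite the recurrence as
\[
c_{n-2k} = -\frac{a^2(n-2k+1)(n-2k+2)}{2k\,(\lambda-2n+2k+1)}\,c_{n-2(k-1)}, \qquad 1\le k\le \lfloor n/2\rfloor,
\]
and telescoping from $c_n=1$ gives
\[
c_{n-2k} = (-1)^k a^{2k}\, \frac{\prod_{j=1}^k (n-2j+1)(n-2j+2)}{2^k k!\,\prod_{j=1}^k(\lambda-2n+2j+1)}.
\]
The remaining task is the bookkeeping identity $\prod_{j=1}^k (n-2j+1)(n-2j+2)=n!/(n-2k)!$ together with $(2k)!=2^k k!\,(2k-1)!!$, which collapse the $a$-independent factor to $\binom{n}{2k}(2k-1)!!$ and yield exactly \eqref{Fpols>1-1}. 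I expect the only genuinely delicate point to be matching this combinatorial factor precisely; everything else is the standard Frobenius-type coefficient recursion, with the excluded values of $\lambda$ serving solely to keep every denominator nonzero.
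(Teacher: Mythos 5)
Your proof is correct and follows essentially the same route as the paper's: substitute a general polynomial into \eqref{diff10}, obtain the two-term coefficient recurrence via the factorization $m(m-1)-\lambda m+n(\lambda-n+1)=(m-n)(m-\lambda+n-1)$, propagate the parity vanishing from $c_{n-1}=0$, and telescope to the closed form. The only differences are cosmetic: the paper first scales to $a=1$ while you carry $a^{2}$ through the recurrence, and your factored form of $P(m)$ makes slightly more explicit why the excluded values $\lambda\in\{n-1,\dots,2n-2\}$ are exactly those that would kill a denominator.
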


\begin{proof}
We prove the result for $a=1$, as the case of arbitrary $a>0$ is then immediate by scaling. Suppose that
$$f(x)=x^n+c_{n-1}x^{n-1}+\dots+c_0.$$
Substituting $f(x)$ into \eqref{diff10} with $a=1$, we obtain
\begin{align*}
 \sum_{k=0}^{n} k(k-1) c_k x^k + \sum_{k=0}^{n} k(k-1) c_k x^{k-2} - \lambda \sum_{k=0}^{n} k c_k x^k +n (\lambda - n + 1) \sum_{k=0}^{n} c_k x^k = 0.
\end{align*}
By considering the coefficient for $x^{n-1}$, one gets
$$(n-1)(n-2)c_{n-1}-\lambda (n-1)c_{n-1}+n(\lambda -n+1)c_{n-1}= (\lambda-2n+2) c_{n-1} = 0,$$
which implies $c_{n-1} = 0$. Note also that, by changing $k$ to $k+2$, we can rewrite
the second sum on the left in the form $\sum_{k=0}^{n-2} (k+2)(k+1) c_{k+2} x^k$. Evaluating coefficients for $x^k$, $k=0,1,\dots,n-2$, we find that
$$k(k-1)c_k+(k+2)(k+1)c_{k+2}-\lambda k c_k +n(\lambda-n+1) c_k=0.$$
By the identity $k(k-1)-\lambda k+n(\lambda-n+1)=(n-k)(\lambda-n-k+1),$ this leads to
$$(k+1)(k+2) c_{k+2} +(n-k)(\lambda-n-k+1) c_k = 0$$
for each $k=0,\dots,n-2$. Hence
\begin{align*}
    c_k = \frac{(k+1)(k+2)}{(n-k)(n+k-1-\lambda)} c_{k+2}, \quad k=0,\dots,n-2.
\end{align*}
Applying the latter relation iteratively, with initial value $c_{n-1}=0$, one can easily see that $c_{n-2k-1}=0$
for $k=0,\ldots,\lfloor (n-1)/2 \rfloor$. Similarly, applying it with initial value $c_n=1$, we find that
\begin{equation*}\label{ckk}
    c_{n-2k} = {n\choose 2k} \prod_{j=1}^{k} \frac{2j-1}{2n-2j-1-\lambda}
    = (-1)^k {n\choose 2k} \frac{(2k-1)!!}{\prod_{j=1}^k (\lambda-2n+2j+1)}
\end{equation*}
for $k=1,\ldots,\lfloor n/2 \rfloor$.
\end{proof}

The following formula for the discriminant of Jacobi polynomials can be found in (3.4.16) of \cite[p. 69]{Ismbook}, where it was derived
under the assumption that $\alpha>-1$ and $\beta>-1$. We will explain here why for each $n \geq 2$ it holds for generic $\alpha,\beta\in\C$
such that the leading coefficient of \eqref{Jacobi} does not vanish.

\begin{lemma}\label{Jdiscr}
Let $P_n^{(\alpha,\beta)}$ be the general Jacobi polynomial defined in \eqref{Jacobi} for $\alpha,\beta\in\C$ and some fixed $n \geq 2$. If
$\alpha+\beta\neq -n-k,\ k=1,\ldots,n,$ then the discriminant of $P_n^{(\alpha,\beta)}$ is given by
\begin{align}\label{Jdis}
  \Delta_{P_n^{(\alpha,\beta)}}= 2^{-n(n-1)}\prod_{k=1}^{n} k^{k-2n+2} (k+\alpha)^{k-1} (k+\beta)^{k-1} (n+k+\alpha+\beta)^{n-k}.
\end{align}
\end{lemma}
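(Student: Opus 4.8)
The plan is to establish the discriminant formula \eqref{Jdis} by an analytic continuation argument, leveraging the fact that it is already known to hold on the classical range $\alpha>-1,\ \beta>-1$. The key observation is that both sides of \eqref{Jdis}, viewed as functions of the two complex parameters $(\alpha,\beta)$ for our fixed $n\ge 2$, are rational functions (indeed polynomial on the right, and rational in $(\alpha,\beta)$ on the left), and that a rational identity valid on an open subset of $\C^2$ with nonempty interior must hold identically wherever both sides are defined.

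First I would recall the definition of the discriminant. For a polynomial $P(x)=c_n\prod_{j=1}^n(x-x_j)$ of degree $n$ with leading coefficient $c_n$, the discriminant is $\Delta_P = c_n^{2n-2}\prod_{j<k}(x_j-x_k)^2$, which by the standard resultant identity equals $(-1)^{n(n-1)/2}c_n^{-1}\mathrm{Res}(P,P')$. The point is that $\mathrm{Res}(P,P')$ is a polynomial in the coefficients of $P$, and the coefficients of $P_n^{(\alpha,\beta)}$ as given in \eqref{Jacobi} are themselves polynomials in $\alpha$ and $\beta$. Hence $\Delta_{P_n^{(\alpha,\beta)}}$ is a polynomial in $(\alpha,\beta)$ divided by the leading coefficient $c_n=(\alpha+\beta+n+1)_n/(n!\,2^n)$ raised to a fixed power; explicitly $\Delta_{P_n^{(\alpha,\beta)}}=(-1)^{n(n-1)/2}c_n^{-1}\mathrm{Res}(P_n^{(\alpha,\beta)},(P_n^{(\alpha,\beta)})')$. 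This makes the left side of \eqref{Jdis} a rational function of $(\alpha,\beta)$, with possible poles only where $c_n=0$, i.e. where $(\alpha+\beta+n+1)_n=0$, which is precisely the excluded locus $\alpha+\beta=-n-k$ for $k=1,\dots,n$. The right side of \eqref{Jdis} is manifestly a polynomial in $(\alpha,\beta)$.

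Next I would invoke the classical result: formula \eqref{Jdis} holds for all real $\alpha,\beta>-1$, as stated in (3.4.16) of \cite{Ismbook}. Consider the difference $D(\alpha,\beta):=\Delta_{P_n^{(\alpha,\beta)}}-(\text{right side of \eqref{Jdis}})$. After clearing the single denominator $c_n^{\,m}$ (for the appropriate fixed exponent $m$), the numerator $N(\alpha,\beta):=c_n^{\,m}D(\alpha,\beta)$ is a polynomial in $(\alpha,\beta)$. Since $D$ vanishes on the open set $\{\alpha>-1,\ \beta>-1\}\subset\R^2$, and $c_n$ does not vanish identically there, the polynomial $N$ vanishes on a subset of $\R^2$ with nonempty interior. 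A polynomial in two variables vanishing on an open subset of $\R^2$ is identically zero, so $N\equiv 0$ as a polynomial, and therefore $D(\alpha,\beta)=0$ for every $(\alpha,\beta)\in\C^2$ with $c_n\ne 0$, i.e. whenever $\alpha+\beta\ne -n-k$ for $k=1,\dots,n$. This is exactly the claimed range, completing the proof.

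The main obstacle is the bookkeeping needed to make the rationality claim airtight: I must confirm that the resultant-based expression for the discriminant is genuinely a polynomial in the coefficients (standard, but worth stating), and that the only denominator introduced is a power of the leading coefficient $c_n$, so that the excluded parameter values match exactly $\alpha+\beta=-n-k$. One subtlety is that the degree $n$ stays fixed throughout, so there is no issue of the polynomial degree dropping as $\alpha+\beta$ varies — degree loss would occur only on the excluded locus where $c_n=0$, which is precisely where we do not claim the formula. Apart from this, the argument is soft and requires no computation with the explicit product; the heavy lifting was already done in establishing the classical case, and analytic continuation of a rational identity does the rest.
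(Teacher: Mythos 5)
Your proof is correct and follows essentially the same route as the paper's: view both sides of \eqref{Jdis} as polynomial (or rational) functions of $(\alpha,\beta)$ for fixed $n$, invoke the classical case $\alpha,\beta>-1$ from (3.4.16) of \cite{Ismbook}, and conclude by the identity principle for polynomials in two variables. The only cosmetic difference is that you treat the discriminant as a rational function with denominator a power of the leading coefficient $c_n$ and then clear denominators, whereas the paper uses the standard fact that the discriminant is itself a polynomial in the coefficients; either way the excluded locus $\alpha+\beta=-n-k$, $k=1,\dots,n$, is handled identically.
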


\begin{proof}
We first remark that if $\alpha+\beta = -n-k$ for some $k \in \{1,\dots,n\}$, then the leading coefficient of $P_n^{(\alpha,\beta)}$ is zero by \eqref{Jacobi}, and the discriminant formula can be interpreted as giving zero value. It is more common, however, to apply the definition of discriminant using the actual degree and the highest non-zero coefficient of $P_n^{(\alpha,\beta)}$. Thus we avoid this ambiguous case in the statement of the lemma.

Note that the coefficients of $P_n^{(\alpha,\beta)}$ are polynomials in $\alpha$ and $\beta$, as defined by the generalized binomial coefficients
\[
{n+\alpha \choose n-k} = \frac{(n+\alpha)(n+\alpha-1)\ldots(\alpha+k+1)}{(n-k)!}
\]
and
\[
{n+\beta \choose k} = \frac{(n+\beta)(n+\beta-1)\ldots(n+\beta-k+1)}{k!}.
\]
Since the discriminant $\Delta_{P_n^{(\alpha,\beta)}}$ is a polynomial in the coefficients of $P_n^{(\alpha,\beta)}$, it is also a polynomial
in $\alpha$ and $\beta$. Note that the right hand side of \eqref{Jdis} is also a polynomial in $\alpha$ and $\beta$. Thus we have two polynomials in
$\alpha$ and $\beta$ that coincide for $\alpha>-1$ and $\beta>-1$ by (3.4.16) of \cite[p. 69]{Ismbook}. Hence they must coincide for all
$\alpha,\beta\in\C$ by the uniqueness results for holomorphic functions. This completes the proof of the lemma.
\end{proof}

\begin{proof}[Proof of Theorem~\ref{s>1}]
We follow ideas similar to those of Stieltjes \cite{Sti1}-\cite{Sti3} and Schur \cite{Sch},
see also \cite[Section 6.7]{Sze1}, and show that the polynomials with roots given by the
weighted Fekete points must satisfy a second order differential equation.
It turns out that this equation has a unique polynomial solution as claimed in Lemma~\ref{prodsin2}.
Then we relate this solution to Jacobi polynomials, and compute its discriminant by using Lemma \ref{Jdiscr}.

Indeed, consider the equivalent logarithmic version of the maximization problem for
\[
g(x_1,\ldots,x_n)= n(n-1)\log \delta_n^w(\R) = \sum_{1\le j<k\le n} \log(x_j-x_k)^2 - s(n-1) \sum_{k=1}^{n} \log(x_k^2+a^2),
\]
where $x_1,\dots,x_n$ are distinct real numbers. Assuming that $g$  has a local maximum, from $\nabla g = 0$ we obtain
\[
\sum_{j\neq k} \frac{2}{x_k-x_j} - \frac{2s(n-1) x_k}{x_k^2+a^2}= 0, \quad k=1,\ldots,n.
\]
In terms of the polynomial $f(x):=\prod_{k=1}^n (x-x_k)$ the latter can be written in the form
\[
 \frac{f''(x_k)}{f'(x_k)} -\frac{2s(n-1) x_k}{x_k^2+a^2} = 0, \quad k=1,\ldots,n,
\]
or, equivalently,
\[
(x_k^2+a^2)f''(x_k) - 2s(n-1) x_kf'(x_k) = 0, \quad k=1,\ldots,n.
\]
Since $(x^2+a^2)f''(x) - 2s(n-1) xf'(x)$ is a polynomial of degree $n$ that vanishes at $n$ distinct points $\{x_k\}_{k=1}^n$, it must be a constant multiple of $f(x).$
Thus we arrive at the differential equation
\[
(x^2+a^2)f''(x) - 2s(n-1) xf'(x) = cf(x),
\]
where $c\in\R.$ Equating the leading coefficients of polynomials on both sides gives
\[
n(n-1) - 2sn(n-1) = c.
\]
The differential equation for $f$ takes the form
\begin{align}\label{diffeq}
(x^2+a^2)f''(x) - 2s(n-1) xf'(x) + n(2s(n-1)-n+1)f(x) = 0.
\end{align}
Since \eqref{diffeq} is \eqref{diff10} with $\lambda=2s(n-1)>2n-2$, we can apply Lemma~\ref{prodsin2} asserting that
\eqref{Fpols>1-1} is the only polynomial solution of this differential equation.
Letting $\lambda=2s(n-1)$ in the polynomial \eqref{Fpols>1-1}, we obtain the polynomial \eqref{Fpols>1}.

We next prove \eqref{GJ} by relating \eqref{diffeq} to the differential equation satisfied by the Jacobi polynomial  $P_n^{(\alpha,\beta)}(t)$:
\[
(1-t^2) u''(t) + (\beta - \alpha - t(\alpha + \beta + 2)) u'(t) + n (n + \alpha + \beta + 1) u(t) = 0,
\]
where in the classical case it is assumed that $ \alpha,\beta>-1,$ see (4.2.6) in  \cite[p. 83]{Ismbook}. We put
\begin{equation}\label{Jacob2}
\alpha = \beta = - s(n-1) - 1,
\end{equation}
so that the equation becomes
\begin{align}\label{Jdiffeq}
(1-t^2) u''(t) + 2s(n-1) t u'(t) + n (n - 2s(n-1) - 1) u(t) = 0,
\end{align}
and $u(t)=P_n^{(-s(n-1)-1,-s(n-1)-1)}(t)$ satisfies it provided $s<0$, which corresponds to the classical range $\alpha,\beta>-1.$
Since in our case $s>1$, we want to show that $P_n^{(-s(n-1)-1,-s(n-1)-1)}(t)$ is a solution of \eqref{Jdiffeq} for any $s\in\R$.

Indeed, for any fixed $n \geq 2$ the coefficients of $P_n^{(-s(n-1)-1,-s(n-1)-1)}(t)$ are polynomials in the parameter $s$ by \eqref{Jacobi} and \eqref{Jacob2}. Fix any $t\in\R$
and consider the polynomial $p(s)$ given by the left hand side of \eqref{Jdiffeq} for $u(t)=P_n^{(-s(n-1)-1,-s(n-1)-1)}(t)$. Since
this polynomial vanishes for all $s<0$, it must vanish identically for all $s\in\R$, and this holds for any $t\in\R$. Setting $t=-ix/a$
and
\begin{align}\label{fJ}
  f(x) := \frac{(2ai)^n n!}{(n-2s(n-1)-1)_n} P_n^{(-s(n-1)-1,-s(n-1)-1)}(-ix/a) = C u(-ix/a),
\end{align}
where $(n-2s(n-1)-1)_n \ne 0$ by $s>1$,
we observe that $f(x)$ is a monic polynomial of degree $n$ by \eqref{Jacobi} and \eqref{Jacob2}.  It is clear that
\[
f'(x) = -\frac{iC}{a} u'(t) \quad\mbox{and}\quad f''(x) = -\frac{C}{a^2} u''(t).
\]
The above relations, together with $t=-ix/a$, transform \eqref{Jdiffeq} into \eqref{diffeq} for this choice of $f$. Hence $f=G$ by
 Lemma~\ref{prodsin2}, and \eqref{GJ} follows.

In the remaining part of this proof, we will derive \eqref{wDiams>1} from \eqref{wpart} and \eqref{dD}. If $P$ is any
polynomial of exact degree $n$ and $Q(x):=cP(-ix/a)$, where $a>0$
and $c\neq 0$ are constants, then the definition of discriminant
readily gives that
\[
|\Delta_Q| = \frac{|c|^{2n-2}}{a^{n(n-1)}} |\Delta_P|.
\]
Applying this relation to \eqref{GJ} with $Q=G$, $P=P_n^{(-s(n-1)-1,-s(n-1)-1)}$, and
$$
c=\frac{(2ai)^n n!}{(n-2s(n-1)-1)_n},
$$
we derive that
\begin{align}\label{discr}
|\Delta_G|^{\frac{1}{n(n-1)}} = 4a \left(\frac{n!}{(n-2s(n-1)-1)_n}\right)^{2/n} |\Delta_P|^{\frac{1}{n(n-1)}}.
\end{align}
Now, Lemma \ref{Jdiscr}, with $\alpha, \beta$ as in \eqref{Jacob2}, gives the formula for the discriminant of the polynomial $P_n^{(-s(n-1)-1,-s(n-1)-1)}$. Inserting that formula into  the right hand side of \eqref{discr} we deduce that
$$
|\Delta_G|^{\frac{1}{n(n-1)}}=2a\left[\frac{n!}{(n-2s(n-1)-1)_n}\right]^{2/n} U,
$$
with
$$
U:=\left( \prod_{k=1}^{n} k^{k-2n+2} (k-s(n-1)-1)^{2k-2} (n+k-2s(n-1)-2)^{n-k} \right)^{\frac{1}{n(n-1)}}.
$$
Inserting this into \eqref{dD} and using \eqref{wpart}, we find that
\begin{align*}
\delta_n^w(\R) & = 2a\left[\frac{n!}{(n-2s(n-1)-1)_n}\right]^{2/n} U
\left(\frac{(2a)^{n}(-s(n-1))_n}{(n-2s(n-1)-1)_n}\right)^{-2s/n}
\\&=\frac{(2a)^{1-2s} (n!)^{2/n}}{\left[(-s(n-1))_n\right]^{2s/n}} \left[(n-2s(n-1)-1)_n\right]^{2(s-1)/n} U,
\end{align*}
which is
\eqref{wDiams>1}.
\end{proof}

\subsection{Proofs for Section \ref{EPR}}

The weighted equilibrium measures for our minimum energy problems turn out to be linear combinations of harmonic measures. For any $r>0$, consider
the domain $\Omega:=\overline{\C}\setminus [-r,r].$ The harmonic measure $\omega_{\Omega}(\xi,\cdot)$ at $\xi\in\Omega$, relative to  $\Omega$,
can be defined as the preimage of the normalized arclength on the unit circle under the conformal mapping $\Phi:\Omega\to\Delta:=\{w:|w|>1\}$
satisfying $\Phi(\xi)=\infty$ and $\Phi'(\xi)>0.$ It is assumed here that $\Phi$ is extended to the boundary of $\Omega$ given by $[-r,r]$ in the standard
way, so that for any Borel set $B\subset\partial\Omega=[-r,r]$ the harmonic measure $\omega_{\Omega}(\xi,B)$ is simply the length of $\Phi(B)$ divided
by $2\pi.$ Thus the harmonic measure is a unit positive Borel measure supported on $[-r,r]$. Alternatively, this harmonic measure can be defined by the balayage
of the unit point mass $\delta_\xi$ from $\Omega$ to $[-r,r]$. More details and background on harmonic measures and balayage can be found in \cite{Ran}
and \cite{ST}.

We begin with some explicit formulas for the necessary harmonic measures.

\begin{lemma}\label{harmeas}
For any $r>0$, let $\Omega:=\overline{\C}\setminus [-r,r].$ The harmonic measures at $\infty$ and $i$, relative to  $\Omega$, are given by
\begin{align} \label{hminf}
d\omega_{\Omega}(\infty,\cdot)(x) &= \frac{dx}{\pi\sqrt{r^2-x^2}},\ x\in(-r,r),
\end{align}
and
\begin{equation} \label{hmi}
d\omega_{\Omega}(i,\cdot)(x) =
\frac{\sqrt{r^2+1} dx}{\pi (1+x^2) \sqrt{r^2-x^2}} ,\ x\in(-r,r).
\end{equation}
\end{lemma}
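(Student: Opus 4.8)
The plan is to realize $\Omega$ as the image of the exterior disk $\Delta=\{w:|w|>1\}$ under the Joukowski map $J(w)=\frac{r}{2}\left(w+\frac1w\right)$, which maps $\Delta$ conformally onto $\Omega$ with $J(\infty)=\infty$, and then to exploit the conformal invariance of harmonic measure. On the boundary circle $w=e^{i\theta}$ one has $J(e^{i\theta})=r\cos\theta$, so $J$ wraps $\partial\Delta$ twice onto the slit $[-r,r]$ via $x=r\cos\theta$, each interior point $x\in(-r,r)$ having the two preimages $\theta$ and $2\pi-\theta$ (the two edges of the slit). Along the way I would record $|dx|=r|\sin\theta|\,d\theta$ and $|\sin\theta|=\sqrt{r^2-x^2}/r$, so that $d\theta=|dx|/\sqrt{r^2-x^2}$ at each preimage. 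Since $\Phi:=J^{-1}$ carries $\Omega$ to $\Delta$ and harmonic measure is conformally invariant, both $\omega_{\Omega}(\infty,\cdot)$ and $\omega_{\Omega}(i,\cdot)$ will be obtained by transporting the corresponding harmonic measure on $\partial\Delta$ back to $[-r,r]$ through this two-to-one correspondence.

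For the harmonic measure at $\infty$, the definition in the text applies directly: $\Phi(\infty)=\infty$ with positive leading Laurent coefficient $2/r$, so $\omega_{\Omega}(\infty,\cdot)$ is the pullback of the normalized arclength $\frac{d\theta}{2\pi}$ on $\partial\Delta$. Summing the contribution $\frac{1}{2\pi}\frac{|dx|}{\sqrt{r^2-x^2}}$ over the two preimages of each $x$ immediately yields the density $\frac{1}{\pi\sqrt{r^2-x^2}}$ of \eqref{hminf}, and a quick integration confirms total mass $1$.

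For the harmonic measure at $i$, I would first locate $w_0:=\Phi(i)$ by solving $J(w_0)=i$, i.e. $w_0+1/w_0=2i/r$; the root lying in $\Delta$ is the purely imaginary point $w_0=i\rho$ with $\rho=\frac{1+\sqrt{1+r^2}}{r}>1$. By conformal invariance $\omega_{\Omega}(i,E)=\omega_{\Delta}(w_0,\Phi(E))$, and $\omega_{\Delta}(w_0,\cdot)$ has the exterior Poisson density $\frac{1}{2\pi}\frac{|w_0|^2-1}{|w_0-e^{i\theta}|^2}$ on $\partial\Delta$ (for instance via the inversion $w\mapsto 1/w$, which reduces it to the standard interior Poisson kernel at $1/w_0$). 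With $w_0=i\rho$ one computes $|w_0-e^{i\theta}|^2=1+\rho^2-2\rho\sin\theta$, and then pulls back and adds the two preimages $\theta_1$ (with $\sin\theta_1=\sqrt{r^2-x^2}/r$) and $\theta_2=2\pi-\theta_1$ (with $\sin\theta_2=-\sqrt{r^2-x^2}/r$). Writing $m:=\sqrt{1+r^2}$, so that $\rho=\frac{m+1}{r}$ and the key identity $m^2-r^2=1$ holds, the sum of the two Poisson values simplifies to $\frac{2m}{1+x^2}$ after combining them over a common denominator; multiplied by the Jacobian factor $\frac{1}{\sqrt{r^2-x^2}}$ this gives exactly the density $\frac{\sqrt{r^2+1}}{\pi(1+x^2)\sqrt{r^2-x^2}}$ of \eqref{hmi}.

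The only genuine subtleties are bookkeeping ones: correctly accounting for the two-sheeted boundary correspondence, which supplies the extra factor making the result a true probability measure rather than a half-mass measure, and the algebraic simplification in the $i$-case, where the identity $m^2-(r^2-x^2)=1+x^2$ is precisely what manufactures the denominator $1+x^2$. I expect this $i$-case algebra to be the main place requiring care, though it is entirely routine. As an independent check one may note that both formulas can alternatively be read off from the balayage of $\delta_\infty$ and $\delta_i$ onto $[-r,r]$, equivalently from the normal derivatives of the Green functions $g_{\Omega}(\cdot,\infty)$ and $g_{\Omega}(\cdot,i)$, which would reconfirm the normalization and the total mass $1$.
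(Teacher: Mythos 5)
Your proof is correct, and it takes a route that is related to but genuinely leaner than the paper's. Both arguments start from the inverse Joukowski map $\Phi_1(z)=(z+\sqrt{z^2-r^2})/r$ of $\Omega$ onto $\Delta=\{w:|w|>1\}$, and both must account for the two-sheeted correspondence between $\partial\Delta$ and the two edges of the slit $[-r,r]$. The difference is in how the point $i$ is handled. The paper follows its stated definition of harmonic measure literally: it composes $\Phi_1$ with the M\"obius self-map $\Phi_2(w)=(\overline{w_0}w-1)/(w-w_0)$ of $\Delta$ sending $w_0=\Phi_1(i)$ to $\infty$, so that the measure to be pulled back is the uniform arclength $|dt|/(2\pi)$, and then it computes the density on the slit as $\bigl(|\Phi_+'(x)|+|\Phi_-'(x)|\bigr)/(2\pi)$ by explicitly differentiating the composition on both edges. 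You instead keep only $\Phi_1$ and invoke conformal invariance together with the exterior Poisson kernel $\frac{1}{2\pi}\,\frac{|w_0|^2-1}{|w_0-e^{i\theta}|^2}$ at the finite point $w_0=i\rho$, $\rho=(1+\sqrt{1+r^2})/r$, summing its values at the two preimages $\theta$ and $2\pi-\theta$ of $x=r\cos\theta$ against the Jacobian $d\theta=dx/\sqrt{r^2-x^2}$. I checked the algebra: with $m=\sqrt{1+r^2}$ the identities $\rho^2-1=2\rho/r$, $\rho^2+1=2m\rho/r$ and $m^2-(r^2-x^2)=1+x^2$ do yield the sum $2m/(1+x^2)$ and hence \eqref{hmi}, and the same pullback with the uniform density gives \eqref{hminf}. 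What your version buys is the avoidance of the M\"obius normalization and of the derivative computation (the two slit edges enter symmetrically through $\pm\sin\theta$); what it costs is the need for the exterior Poisson kernel, which you justify by inversion, whereas the paper needs only its own definition of harmonic measure as preimage of normalized arclength. A minor further difference: for \eqref{hminf} the paper simply cites the classical fact that $\omega_{\Omega}(\infty,\cdot)$ is the arcsine (equilibrium) distribution of $[-r,r]$, while you rederive it by the same pullback; both are fine.
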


\begin{proof}
The first equation \eqref{hminf} is well known, as $\omega_{\Omega}(\infty,\cdot)(x)$ is the classical (not weighted) equilibrium measure on $[-r,r]$ given by the arcsin or
Chebyshev distribution, see \cite{Ran}, \cite{ST} and \cite{Tsu}. It remains to find $\omega_{\Omega}(i,\cdot)$ explicitly for $E=[-r,r],\ r>0.$

To do this we will use  the conformal mapping $\Phi$ of $\Omega=\overline{\C}\setminus [-r,r]$ onto $\Delta=\{t\in\overline{\C}:|t|>1\}$ such that $\Phi(i)=\infty.$ As we already mentioned, the image of $\omega_{\Omega}(i,\cdot)$ under $\Phi$ is $\omega_{\Delta}(\infty,\cdot) = |dt|/(2\pi)$ supported on $\partial \Delta = \{t\in\C:|t|= 1\}$, see Theorem 4.3.8 on \cite[p.~101]{Ran}. The mapping $\Phi$ is constructed as the composition of two standard conformal mappings. These are   $$w=\Phi_1(z):=\frac{z+\sqrt{z^2-r^2}}{r}$$
that maps $\Omega$ onto $\Delta$ with
$$w_0=\Phi_1(i)=\frac{(\sqrt{r^2+1}+1)i}{r},$$
and
$$t=\Phi_2(w):=\frac{\overline{w_0} w -1}{w-w_0}$$
that is a self map of $\Delta$ sending $w_0$ to infinity. Defining the upper limiting values
$$\Phi_+(x) := \lim_{y\to 0+} \Phi(x+iy),\quad x\in[-r,r],$$
and the lower limiting values
$$\Phi_-(x) := \lim_{y\to 0-} \Phi(x+iy),\quad x\in[-r,r],$$
we obtain
\[
\Phi_\pm(x) = \Phi_2\left(\frac{x \pm i \sqrt{r^2-x^2}}{r}\right),\quad x\in[-r,r].
\]
This generates the following expression for $d\omega_{\Omega}(i,x)$ as preimage of $|dt|/(2\pi)$:
\begin{equation}\label{prema}
\frac{d\omega_{\Omega}(i,x)}{dx} = \frac{|\Phi'_+(x)|+|\Phi'_-(x)|}{2\pi}\,dx.
\end{equation}

Let us denote $u_{\pm}(x):=(x \pm i \sqrt{r^2-x^2})/r$. Using $\Phi_2'(w)=(1-|w_0|^2)/(w-w_0)^2$  we derive that
$$\Phi_{\pm}'(x)= \frac{d u_{\pm}(x)}{dx} \frac{1-|w_0|^2}{(u_{\pm}(x)-w_0)^2} = \left(\frac{1}{r} \mp \frac{ix}{r\sqrt{r^2-x^2}}\right) \frac{-2(\sqrt{r^2+1}+1)}{(x \pm i\sqrt{r^2-x^2} -(\sqrt{r^2+1}+1)i)^2}.$$
Consequently,
\begin{align*}
|\Phi_{\pm}'(x)| &=\frac{1}{\sqrt{r^2-x^2}} \cdot
\frac{2(\sqrt{r^2+1}+1)}{x^2 + \left(\sqrt{r^2-x^2} \mp (\sqrt{r^2+1}+1)\right)^2} \\& = \frac{2(\sqrt{r^2+1}+1)}{\sqrt{r^2-x^2}(2r^2+2+2\sqrt{r^2+1} \mp 2(\sqrt{r^2+1}+1) \sqrt{r^2-x^2})}
\\& =\frac{\sqrt{r^2+1}+1}{\sqrt{r^2-x^2} (r^2+1+\sqrt{r^2+1} \mp (\sqrt{r^2+1}+1)\sqrt{r^2-x^2})} \\&
=\frac{1}{\sqrt{r^2-x^2}(\sqrt{r^2+1} \mp \sqrt{r^2-x^2})}.
\end{align*}
Therefore,
\begin{align*}
|\Phi_{+}'(x)|+|\Phi_{-}'(x)| & =\frac{1}{\sqrt{r^2-x^2}(\sqrt{r^2+1} - \sqrt{r^2-x^2})}+\frac{1}{\sqrt{r^2-x^2}(\sqrt{r^2+1} + \sqrt{r^2-x^2})} \\&=\frac{2\sqrt{r^2+1}}{\sqrt{r^2-x^2}(r^2+1-(r^2-x^2))}=\frac{2\sqrt{r^2+1}}{(1+x^2)\sqrt{r^2-x^2}}.
\end{align*}
Combining this with \eqref{prema}, we derive \eqref{hmi}.
\end{proof}

\begin{proof}[Proof of Theorem~\ref{thm2.1}]
Let $r>0$ be an arbitrary but fixed number, and set $\Omega=\overline{\C}\setminus [-r,r]$ as before. Define the measure
\begin{equation}\label{mumu}
\mu := s \omega_{\Omega}(i,\cdot) - (s-1) \omega_{\Omega}(\infty,\cdot).
\end{equation}

We will show that $\mu_w=\mu$ for an appropriate choice of $r$. Equations \eqref{2.4} and \eqref{2.5} characterize $\mu_w$ in the sense that
if for a positive unit Borel measure $\mu$ supported on $[-r,r]$ one has
\begin{equation} \label{5.1}
U^{\mu}(x) + s \log|x-i| \geq F, \qquad x \in \R,
\end{equation}
and
\begin{equation} \label{5.2}
U^{\mu}(x) + s \log|x-i| = F, \qquad x \in [-r,r],
\end{equation}
where $F$ is a constant, then Theorem 3.3 of \cite[Ch. I]{ST}) implies that $\mu_w=\mu$ and $F_w=F.$

It is clear that the total mass of $\mu$
is one for any $r>0,$ but we need to ensure that $\mu$ is a positive measure. For
\begin{equation}\label{5.3}
r=\frac{\sqrt{2s-1}}{s-1}
\end{equation}
we have $\sqrt{(r^2+1)/(r^2-x^2)}=s/\sqrt{2s-1-(s-1)^2x^2}$.
Thus, by
Lemma \ref{harmeas} combined with \eqref{mumu}, we find the explicit form for $\mu$
\begin{align*}
d\mu(x) = \left(\frac{s^2}{1+x^2}-(s-1)^2\right) \frac{dx}{\pi\sqrt{2s-1-(s-1)^2x^2}} ,\ x\in(-r,r)
\end{align*}
which simplifies to the form stated on the right hand side of \eqref{2.7}.
From that form and $r$ as in \eqref{5.3}, we clearly see that at the endpoints of the interval
$$[-r,r]=\left[-\frac{\sqrt{2s-1}}{s-1},\frac{\sqrt{2s-1}}{s-1}\right]$$ the density of $\mu$ is zero, and therefore $\mu$ is a positive measure on that interval.

We now show that \eqref{5.2} holds for any $r>0$, by using some general properties of the balayage method in potential theory.
The harmonic measure $\omega_{\Omega}(i,\cdot)$ is the balayage of the point mass $\delta_i$ from the domain $\Omega=\overline{\C}\setminus [-r,r]$ onto its boundary $\partial\Omega=[-r,r]$ , see Section II.4 of \cite{ST}. It follows from Theorem 4.4 of \cite[p.~115]{ST} that the logarithmic potential of $\omega_{\Omega}(i,\cdot)$ satisfies
\begin{align}\label{5.3a}
U^{\omega_{\Omega}(i,\cdot)}(x) + \log|x-i| &= U^{\omega_{\Omega}(i,\cdot)}(x) -  U^{\delta_i}(x) = \int g_\Omega(t,\infty)\,d\delta_i(t)= g_\Omega(i,\infty)
\end{align}
for $x \in [-r,r]$, where $g_\Omega(t,\infty)$ is the Green function of $\Omega$ with logarithmic pole at $\infty.$ We also need the well known fact that the potential of
$\omega_{\Omega}(\infty,\cdot)(x)$, which is the equilibrium measure of $[-r,r]$, is equal to Robin's constant on $[-r,r]$ by Frostman's Theorem (see also
Example 3.5 of \cite[p. 45]{ST} for a direct computation):
\begin{align}  \label{5.3b}
U^{\omega_{\Omega}(\infty,\cdot)}(x)= - \log \textup{cap}([-r,r]) = \log (2/r),\ x\in[-r,r].
\end{align}
Combining \eqref{5.3a} with \eqref{5.3b}, we derive that
\begin{align*}
U^{\mu}(x) + s \log|x-i| &= s \left(U^{\omega_{\Omega}(i,\cdot)}(x) + \log|x-i|\right) - (s-1) U^{\omega_{\Omega}(\infty,\cdot)}(x) \\
&= s g_\Omega(i,\infty) + (s-1)  \log (r/2),\ x\in[-r,r],
\end{align*}
and so conclude that \eqref{5.2} is satisfied with the constant
\begin{align}  \label{5.6}
F = s g_\Omega(i,\infty) + (s-1)  \log (r/2).
\end{align}

The next step is to prove that \eqref{5.1} holds with this value of $F$. For that purpose, we connect the potentials of harmonic measures
with Green functions and conformal mappings. In particular, we have
\begin{align} \label{5.4}
U^{\omega_{\Omega}(\infty,\cdot)}(z) =  \log (2/r) - g_\Omega(z,\infty),\ z\in\Omega,
\end{align}
by Theorem III.37 in \cite[p.~82]{Tsu}. From Theorem III.39 in \cite[p.~84]{Tsu} we obtain
\begin{align}  \label{5.5}
g_\Omega(z,\infty) = \log|\Phi_1(z)|,\ z\in\Omega,
\end{align}
where $\Phi_1:\Omega\to\Delta$ is the conformal mapping defined in the proof of Lemma \ref{harmeas}. Theorem 5.1 of \cite[p. 124]{ST} yields
\begin{align*}
U^{\delta_i}(z) - \int g_\Omega(z,t)\,d\delta_i(t) = U^{\omega_{\Omega}(i,\cdot)}(z) - \int g_\Omega(t,\infty)\,d\delta_i(t), \quad z\in\Omega,
\end{align*}
so that
\begin{align} \label{5.7}
U^{\omega_{\Omega}(i,\cdot)}(z) = g_\Omega(i,\infty) - g_\Omega(z,i) - \log|z-i|, \quad z\in\Omega.
\end{align}

Since $U^{\mu}(x) + s \log|x-i|$ is an even function on $\R,$ and since \eqref{5.2} is already proved, it suffices to show that
the estimate in \eqref{5.1} holds for $x\ge r.$ Using \eqref{5.6}, \eqref{5.4} and \eqref{5.7}, for $x\ge r$ we deduce that
\begin{align*}
U^{\mu}(x) + s \log|x-i| &= s \left(U^{\omega_{\Omega}(i,\cdot)}(x) + \log|x-i|\right) - (s-1) U^{\omega_{\Omega}(\infty,\cdot)}(x) \\
&= s g_\Omega(i,\infty) - s g_\Omega(x,i) + (s-1)  \log (r/2) +(s-1) g_\Omega(x,\infty) \\
&= F + (s-1) g_\Omega(x,\infty) - s g_\Omega(x,i).
\end{align*}
Thus \eqref{5.1} reduces to showing the inequality
\begin{equation}\label{prom}
(s-1) g_\Omega(x,\infty) - s g_\Omega(x,i) \ge 0
\end{equation}
 for $x\ge r,$ under assumption \eqref{5.3}.

Once again we pass to the conformal mappings
from Lemma \ref{harmeas} to express the Green functions by \eqref{5.5} and $$g_\Omega(x,i)=\log|\Phi(x)|=\log|\Phi_2(\Phi_1(x))|.$$ Setting
$y=(\Phi_1(x))^2$ for $x\ge r$, and using $w_0=\Phi_1(i)=(\sqrt{r^2+1}+1)i/r=i\sqrt{2s-1}$ by \eqref{5.3}, we deduce that
$$\left|\Phi_2(\Phi_1(x))\right|=\left|\frac{\overline{w_0}\Phi_1(x)-1}{\Phi_1(x)-w_0}\right|=\sqrt{\frac{1+|\Phi_1(x)|^2(2s-1)}{|\Phi_1(x)|^2+2s-1}}=\sqrt{\frac{(2s-1)y+1}{y+2s-1}},$$
and hence
\begin{align*}
g(y) &:= (s-1) g_\Omega(x,\infty) - s g_\Omega(x,i) = (s-1) \log|\Phi_1(x)| - s \log|\Phi_2(\Phi_1(x))| \\
&= \frac{s-1}{2} \log y - \frac{s}{2} \log
\left(\frac{(2s-1)y+1}{y+2s-1}\right).
\end{align*}
Here, $y \ge 1$ as $x \ge r$.
After some simple algebraic transformations, we arrive at the following expression for the derivative of $g$:
\[
g'(y)=\frac {(s-1)( 2s-1)(y-1)^2}{2y((2s-1)y+1)( 2s-1+y)}.
\]
Since $g(1)=0$ and $g'(y)>0$ for $y>1$, this
completes the proof of the inequality $g(y) \ge 0$ for $y \ge 1$, and so that of \eqref{prom} for $x \ge r$. This proves \eqref{5.1}. Thus $\mu_w=\mu$ and
\begin{align*}
F_w = F =  s g_\Omega(i,\infty) + (s-1)  \log (r/2)
\end{align*}
as in \eqref{5.6}.

On the final step of this proof, we compute the weighted Robin constant $V_w=I(\mu_w)$:
\begin{align}  \label{5.8}
V_w &= F_w + \int Q\,d\mu_w = F_w + s^2 \int \log|x-i|\,d\omega_{\Omega}(i,\cdot) - s(s-1) \int \log|x-i|\,d\omega_{\Omega}(\infty,\cdot) \nonumber \\
&= F_w - s^2 U^{\omega_{\Omega}(i,\cdot)}(i) + s(s-1) U^{\omega_{\Omega}(\infty,\cdot)}(i) \nonumber\\
&= F_w - s^2 U^{\omega_{\Omega}(i,\cdot)}(i) + s(s-1) \left(\log (2/r) - g_\Omega(i,\infty)\right),
\end{align}
where we used \eqref{5.4}. Since the left hand side of \eqref{5.7} is continuous at $z=i$, we obtain
\begin{align*}
U^{\omega_{\Omega}(i,\cdot)}(i) = g_\Omega(i,\infty) - \lim_{z\to i} (g_\Omega(z,i) + \log|z-i|).
\end{align*}

Next, using
$$
|w_0|=\frac{\sqrt{r^2+1}+1}{r} \quad \text{and} \quad |\Phi_1'(i)|=\frac{\sqrt{r^2+1}+1}{r\sqrt{r^2+1}},
$$
we compute the limit
\begin{align*}
\lim_{z\to i} (g_\Omega(z,i) + \log|z-i|) &= \lim_{z\to i} \log|\Phi(z)(z-i)| = \lim_{z\to i} \log|\Phi_2\left(\Phi_1(z)\right)(z-i)| \\ &= \log\left| \lim_{w\to w_0} (\overline{w}_0 w - 1)\, \lim_{z\to i} \frac{z-i}{w-w_0} \right|\quad (w=\Phi_1(z)) \\ &= \log \left|\frac{|w_0|^2-1}{\Phi_1'(i)}\right| = \log\left(\frac{2\sqrt{r^2+1}}{r}\right).
\end{align*}
Hence
$$U^{\omega_{\Omega}(i,\cdot)}(i) = g_\Omega(i,\infty) - \log\left(\frac{2\sqrt{r^2+1}}{r}\right).$$
Combining this with \eqref{5.8}, \eqref{5.6} (where $F_w=F$), \eqref{5.5}, which gives $$g_\Omega(i,\infty) = \log|\Phi_1(i)| =\log |w_0|=\log \left(\frac{\sqrt{r^2+1}+1}{r}\right),$$ and finally with \eqref{5.3},  we derive that
\begin{align*}
V_w &= F_w - s^2 \left( g_\Omega(i,\infty) - \log\left(\frac{2\sqrt{r^2+1}}{r}\right) \right) + (s^2-s)\left(\log \left(\frac{2}{r}\right) - g_\Omega(i,\infty)\right) \\
&= (2s-2s^2) g_\Omega(i,\infty) + s^2 \log \left(\frac{2\sqrt{r^2+1}}{r}\right) - (s-1)^2 \log \left(\frac{r}{2}\right) \\
&= (2s-2s^2) \log\left(\frac{\sqrt{r^2+1}+1}{r}\right) + s^2 \log\left(\frac{2\sqrt{r^2+1}}{r}\right) - (s-1)^2 \log\left(\frac{r}{2}\right) \\
&= (2s-2s^2) \log\sqrt{2s-1} + s^2 \log \left(\frac{2s}{\sqrt{2s-1}}\right) - (s-1)^2 \log \left(\frac{\sqrt{2s-1}}{2(s-1)}\right) \\
&= - \frac{(2s-1)^2}{2} \log(2s-1) + (s-1)^2 \log(s-1) + s^2 \log s + (2s^2-2s+1) \log 2.
\end{align*}
Hence \eqref{2.8} follows from \eqref{2.6}.
\end{proof}

\subsection{Proofs for Section \ref{FPC}}

\begin{proof}[Proof of Theorem~\ref{FpT}]
We first observe that the M\"obius mapping $\phi:\T\to\T$ defined in \eqref{map} is a bijection. Hence for any
set of distinct points $Z_n=\{z_k\}_{k=1}^n\subset\T$ there is a set of distinct points $\{w_k\}_{k=1}^n\subset\T$
such that $z_k=\phi(w_k),\ k=1,\ldots,n$. It follows that
\begin{align*}
|z_j-z_k| = \left|\frac{bw_j-1}{w_j-b} - \frac{bw_k-1}{w_k-b}\right| = \frac{|1-b^2| |w_j-w_k|}{|w_j-b| |w_k-b|}
\end{align*}
and
\begin{align*}
|z_k-b| = \left|\frac{bw_k-1}{w_k-b}-b \right|= \frac{|1-b^2|}{|w_k-b|}.
\end{align*}
Hence
\begin{align*}
|V(Z_n)|^2 \prod_{k=1}^n w(z_k)^{2(n-1)} &= \prod_{1 \leq j<k \leq n} |z_j-z_k|^2 \, \prod_{k=1}^n |z_k-b|^{-2(n-1)} \\
&=  |1-b^2|^{-n(n-1)} \prod_{1 \leq j<k \leq n} |w_j-w_k|^2.
\end{align*}
As we already observed in the proof of Lemma~\ref{prodsin}, the product $\prod_{1 \leq j<k \leq n} |w_j-w_k|^2$ does not exceed $n^n$, and takes this largest value if and only if the points $\{w_k\}_{k=1}^n\subset\T$ are equally spaced on the unit
circle (see \cite{BC} or \cite{Ger}). Thus \eqref{wDiamT}
follows from the definition \eqref{wFekete}, whereas
\eqref{wFpT} follows from \eqref{map}.
\end{proof}

\subsection{Proofs for Section \ref{EPC}}

\begin{proof}[Proof of Theorem~\ref{4.1}]
The strategy of this proof is similar to that of the proof for Theorem~\ref{2.1}. However, this proof is much simpler. In fact, we will show that for the Poisson measure
\begin{equation}\label{5.20}
d\mu(e^{it})=\frac{|1-b^2|\,dt}{2\pi(1 - 2b\cos t + b^2)} ,\ t\in[0,2\pi),
\end{equation}
on the unit circle $\T$, we have
\begin{equation} \label{5.21}
U^{\mu}(z) + \log|z-b| = F, \qquad z \in \T,
\end{equation}
where $F$ is a constant. Thus Theorem 3.3 of \cite[Ch. I]{ST}) implies that $\mu_w=\mu$ and $F_w=F.$ This measure $\mu$ is also well known as the harmonic measure and the balayage of the point mass $\delta_b$ from either the unit disk $\D$ or its exterior $\Delta:=\overline{\C}\setminus \overline{\D}$ onto $\T$, see Section II.4 of \cite{ST} and \cite[p. 96]{Ran}.

Let us first consider the case $|b|>1,$ i.e., $b\in\Delta.$ Then $\mu=\omega_{\Delta}(b,\cdot)$, and it follows from Theorem 4.4 of \cite[p.~115]{ST} that the logarithmic potential of $\omega_{\Delta}(b,\cdot)$ satisfies
\begin{align*}
U^{\omega_{\Delta}(b,\cdot)}(z) + \log|z-b| &= U^{\omega_{\Delta}(b,\cdot)}(z) -  U^{\delta_b}(z) = \int g_\Delta(w,\infty)\,d\delta_b(w)= g_\Delta(b,\infty)
\end{align*}
for $z \in \T$. Here, $g_\Delta(w,\infty)=\log|w|$ is the Green function of $\Delta$ with logarithmic pole at $\infty,$ see \cite[p. 109]{ST}. Hence \eqref{5.21} holds in this case with $F=\log|b|.$ We conclude that $\mu_w=\mu$ and $F_w=F=\log|b|.$

It remains to find the minimum energy $V_w$:
\begin{align}  \label{5.22}
V_w = F_w + \int Q\,d\mu_w = \log|b| + \int \log|z-b|\,d\omega_{\Delta}(b,z) = \log|b| - U^{\omega_{\Delta}(b,\cdot)}(b).
\end{align}
In order to evaluate $U^{\omega_{\Delta}(b,\cdot)}(b)$, we use Theorem 5.1 of \cite[p. 124]{ST}:
\begin{align*}
U^{\delta_b}(z) - \int g_\Delta(z,w)\,d\delta_b(w) = U^{\omega_{\Delta}(b,\cdot)}(z) - \int g_\Delta(w,\infty)\,d\delta_b(w), \quad z\in\Delta,
\end{align*}
so that
\begin{align} \label{5.23}
U^{\omega_{\Delta}(b,\cdot)}(z) = g_\Delta(b,\infty) - g_\Delta(z,b) - \log|z-b|, \quad z\in\Delta.
\end{align}
Since the involved Green functions are given by (cf. \cite[p. 109]{ST})
\begin{align*}
g_\Delta(z,\infty)=\log|z| \quad \mbox{and} \quad g_\Delta(z,b)=\log\frac{|1-bz|}{|z-b|}, \quad z\in\Delta,
\end{align*}
from \eqref{5.23} and \eqref{5.22} we derive that
\begin{align*}
V_w &= \log|b| - U^{\omega_{\Delta}(b,\cdot)}(b) = \lim_{z\to b} (g_\Delta(z,b) + \log|z-b|) \\
&=  \lim_{z\to b} \log|1-bz| = \log|1-b^2|.
\end{align*}
Thus \eqref{4.2} follows from \eqref{2.6}.

In the case when $|b|<1$ (and so $b\in\D$) the proof is slightly different. In this case $\mu=\omega_{\D}(b,\cdot)$, and Theorem 4.1 of \cite[p.~110]{ST} gives
\begin{align*}
U^{\omega_{\D}(b,\cdot)}(z) + \log|z-b| &= U^{\omega_{\D}(b,\cdot)}(z) -  U^{\delta_b}(z) = 0, \quad z \in \T,
\end{align*}
which yields $\mu_w=\mu$ and $F_w=F=0.$ Hence
\begin{align}  \label{5.24}
V_w = F_w + \int Q\,d\mu_w = \int \log|z-b|\,d\omega_{\D}(b,z) = - U^{\omega_{\D}(b,\cdot)}(b).
\end{align}
Applying Theorem 5.1 of \cite[p. 124]{ST}, we find that
\begin{align*}
U^{\delta_b}(z) - \int g_\D(z,w)\,d\delta_b(w) = U^{\omega_{\D}(b,\cdot)}(z), \quad z\in\D,
\end{align*}
and
$$
- U^{\omega_{\D}(b,\cdot)}(z) = g_\D(z,b) + \log|z-b|, \quad z\in\D.
$$
Combining the above identity with \eqref{5.24} and the known representation of the Green function (cf. \cite[p. 109]{ST})
\begin{align*}
g_\D(z,b)=\log\frac{|1-bz|}{|z-b|}, \quad z\in\D,
\end{align*}
we derive that
\begin{align*}
V_w &= - U^{\omega_{\D}(b,\cdot)}(b) = \lim_{z\to b} (g_\D(z,b) + \log|z-b|) \\
&=  \lim_{z\to b} \log|1-bz| = \log|1-b^2|.
\end{align*}
Thus, as before, \eqref{2.6} yields \eqref{4.2}.

Finally, \eqref{4.3} follows from \eqref{4.1} and Theorem 1.3 of \cite[Ch. III]{ST}).
\end{proof}

\medskip
{\bf Acknowledgement.}
The research of the first named author was funded by the European Social Fund according to the activity ‘Improvement of researchers’ quali\-fication by implementing
world-class R\&D projects’ of Measure  No. 09.3.3-LMT-K-712-01-0037. Research of the second author was partially supported by NSF via the American Institute of
Mathematics, and by the College of Arts and Sciences of Oklahoma State University.

\end{document}